\newtheorem{thm}{Theorem}[section]
\newtheorem{lemma}[thm]{Lemma}
\newtheorem{prop}[thm]{Proposition}
\newtheorem{cor}[thm]{Corollary}
\newtheorem{fact}[thm]{Fact}
\theoremstyle{definition}
\newtheorem{df}[thm]{Definition}
\newtheorem{ex}[thm]{Example}
\newtheorem{question}[thm]{Question}
\newtheorem{rmk}[thm]{Remark}
\def\dotminussym#1#2{%
  \setbox0=\hbox{$\m@th#1-$}%
  \kern.5\wd0%
  \hbox to 0pt{\hss\hbox{$\m@th#1-$}\hss}%
  \raise.6\ht0\hbox to 0pt{\hss$\m@th#1.$\hss}%
  \kern.5\wd0}
\newcommand{\dotminus}{\mathbin{\mathpalette\dotminussym{}}}
\begin{document}

\title{Pseudofinite and pseudocompact metric structures}
\author{Isaac Goldbring}
\thanks{Goldbring's work was partially supported by NSF grant DMS-1007144.}
\address{University of California, Los Angeles, Department of Mathematics\\ 520 Portola Plaza, Box 951555\\ Los Angeles, CA 90095-1555, USA}
\email{isaac@math.ucla.edu}
\urladdr{http://www.math.ucla.edu/~isaac}
\author[Vinicius C.\,L.]{Vinicius Cif\'u Lopes}
\address{Universidade Federal do ABC\\ Av.~dos Estados, 5001\\ Santo Andr\'e, SP 09210-971, Brazil}
\email{vinicius@ufabc.edu.br, vinicius@alumni.illinois.edu}
\urladdr{http://hostel.ufabc.edu.br/~vinicius}

\begin{abstract}
We initiate the study of pseudofiniteness in continuous logic.  We introduce a related concept, namely that of pseudocompactness, and investigate the relationship between the two concepts.  We establish some basic properties of pseudofiniteness and pseudocompactness and provide many examples.  We also investigate the injective-surjective phenomenon for definable endofunctions in pseudofinite structures.  
\end{abstract}

\maketitle

\section{Introduction}\label{sec-intro}

Pseudofiniteness is a well-known, interesting, and useful notion in classical logic; see e.g.~\cite{Ax,Felgner,Hru,Vaa}. Our goal is to introduce a concept in continuous logic in the setting of \cite{BBHU,BU} which corresponds as much as possible to the classical one.

However, in view of the lack of actual negations in the formal language, two different notions arise in our study. We choose to define them after the names of \emph{pseudofiniteness} and \emph{strong pseudofiniteness;} each resembles a different aspect of the classical notion.

We also introduce the related concept of \emph{pseudocompactness} (and a corresponding stronger version), because compact structures in many cases appear to be the right counterpart in continuous logic of finite structures in classical logic: they are saturated, have no proper ultrapowers, and indeed are totally categorical. (We dedicate an appendix to this fact.) Although we show that pseudocompactness is equivalent to pseudofiniteness in many cases, the stronger versions are distinct in an essential way, and thus we end up considering three different concepts.

We present detailed proofs for statements which correspond to trivial or well-known properties of classical pseudofiniteness, in order to highlight the nuances of continuous logic.

Section~\ref{sec-def} defines those four properties, and proves some elementary results about them.

In Section~\ref{sec-class}, we show that, in the case of classical languages and structures, the four notions coincide with the original one; this seems to require unusual attention.

Section~\ref{sec-ex} introduces some basic examples and questions.

Section~\ref{sec-relation} proves the equivalence of pseudofiniteness and pseudocompactness for relational languages and argues considerably in favor of a conjecture of general equivalence. Currently, we need to recourse to ``almost structures'', which satisfy a weaker clause of modulus of continuity for each nonlogical symbol.

In Section~\ref{sec-injsur}, we discuss the injectivity-surjectivity of definable endofunctions, that is, whether injective definable functions of the form $X\to X$ are surjective, and conversely. In classical logic, that property is a straightforward consequence of (and one main source of interest in) pseudofiniteness, although it holds independently of the latter as well. In continuous logic, work is more complex and requires strong pseudofiniteness and a strong assumption on the definable function; it also helps to distinguish between pseudofiniteness and strong pseudofiniteness.

We assume that the reader is familiar with the basics of continuous logic; otherwise, they may consult the paper \cite{BU} or the survey \cite{BBHU}.  There is one nuance of continuous logic that we use throughout the paper, which we mention here.  Since continuous logic lacks negation, one cannot, \emph{a priori,} express implications.  However, there is a trick that one can use to get around this problem:

\begin{fact}\cite[Prop.~7.14]{BBHU}
Suppose that $L$ is a $1$-bounded continuous signature, $M$ is an $\omega$-saturated $L$-structure, and $\varphi(x)$ and $\psi(x)$ are two $L$-formulae, where $x$ is an $n$-tuple of variables.  Then the following are equivalent:
\begin{enumerate}
\item For all $a\in M^n$, if $\varphi^M(a)=0$, then $\psi^M(a)=0$;
\item There is an increasing, continuous function $\alpha\colon[0,1]\to [0,1]$ satisfying $\alpha(0)=0$ so that, for all $a\in M^n$, we have $\psi^M(a)\leq \alpha(\varphi^M(a))$.
\end{enumerate}
\end{fact}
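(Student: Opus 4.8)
The plan is to dispatch the easy direction first and then concentrate on the substantive one. For $(2)\Rightarrow(1)$, I would simply note that if $\psi^M(a)\le\alpha(\varphi^M(a))$ holds for all $a$ and $\varphi^M(a)=0$, then $\psi^M(a)\le\alpha(0)=0$, forcing $\psi^M(a)=0$; saturation plays no role here, and $1$-boundedness is not even needed.

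The content lies in $(1)\Rightarrow(2)$, and the step I expect to be the main obstacle is upgrading the pointwise implication in (1) to a \emph{uniform} one: for every $\epsilon>0$ there is $\delta>0$ such that $\varphi^M(a)\le\delta$ implies $\psi^M(a)\le\epsilon$ for all $a\in M^n$. This is exactly where $\omega$-saturation is essential. I would argue by contradiction. If uniformity failed for some fixed $\epsilon>0$, then for each $k\ge 1$ there would be $a_k\in M^n$ with $\varphi^M(a_k)\le 1/k$ and $\psi^M(a_k)\ge\epsilon$. Hence the countable set of conditions $\{\varphi(x)\le 1/k : k\ge 1\}\cup\{\psi(x)\ge\epsilon\}$ would be finitely satisfiable in $M$, since any finite subset is satisfied by a suitable $a_k$. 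As this is a partial type over the empty parameter set and $M$ is $\omega$-saturated, it would be realized by some $a\in M^n$; but then $\varphi^M(a)=0$ while $\psi^M(a)\ge\epsilon>0$, contradicting (1).

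With uniformity secured, I would build $\alpha$ explicitly. Define $g\colon[0,1]\to[0,1]$ by $g(t)=\sup\{\psi^M(a) : a\in M^n,\ \varphi^M(a)\le t\}$, using the convention $\sup\emptyset=0$. Then $g$ is nondecreasing and bounded by $1$ (by $1$-boundedness of the signature), satisfies $\psi^M(a)\le g(\varphi^M(a))$ for every $a$ directly from the definition, and the uniform claim yields $\lim_{t\to 0^+}g(t)=0=g(0)$, so $g$ is continuous at $0$ with value $0$. Since $g$ need not be continuous elsewhere, the last move is to dominate it by a continuous, increasing function $\alpha$ with $\alpha(0)=0$. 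Extending $g$ to be constant past $1$ and setting $\alpha(t)=\int_1^2 g(tu)\,du$ works: this is continuous and nondecreasing in $t$, satisfies $\alpha(t)\ge g(t)$ because $g(tu)\ge g(t)$ for $u\ge 1$, lands in $[0,1]$, and has $\alpha(0)=0$; if one insists on strict monotonicity, replacing $\alpha$ by $\min\{1,\alpha(t)+t\}$ does the job while preserving continuity and the domination. Combining the estimates gives $\psi^M(a)\le g(\varphi^M(a))\le\alpha(\varphi^M(a))$ for all $a$, which is exactly (2).
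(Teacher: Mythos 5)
This statement is quoted in the paper as a Fact from \cite[Prop.~7.14]{BBHU} with no proof given, so there is no internal argument to compare against; judged on its own, your proof is correct and follows the standard route: $\omega$-saturation applied to the finitely satisfiable set of conditions $\{\varphi(x)\le 1/k : k\ge 1\}\cup\{\epsilon\dotminus\psi(x)=0\}$ upgrades the pointwise implication to a uniform one, and one then dominates $g(t)=\sup\{\psi^M(a):\varphi^M(a)\le t\}$ by a continuous modulus, your averaging $\alpha(t)=\int_1^2 g(tu)\,du$ being a clean substitute for the piecewise-linear interpolation used in \cite{BBHU}. One cosmetic caveat: your final repair $\min\{1,\alpha(t)+t\}$ is \emph{not} strictly increasing once it hits the cap at $1$; this is harmless, since ``increasing'' here is the weak (nondecreasing) sense, which is all that the paper's application --- expressing condition (2) as $\sup_x(\psi(x)\dotminus\alpha(\varphi(x)))=0$ --- ever needs, so your uncapped $\alpha$ already suffices as stated.
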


The import of this fact is that the second condition is indeed expressible by the condition $\sup_x (\psi(x)\dotminus \alpha(\varphi(x)))=0$.  

\section{Definitions and basic properties}\label{sec-def}

Until further notice, $L$ is a $1$-bounded metric signature (assumed to be one-sorted for simplicity), and $M$ is an $L$-structure.

\begin{df}
We say that $M$ is \emph{pseudofinite} (resp.\ \emph{pseudocompact}) if $\sigma^M=0$ for any $L$-sentence $\sigma$ such that $\sigma^A=0$ for all finite (resp.\ compact) $L$-structures $A$.
\end{df}

\begin{rmk}
Unfortunately, the term ``pseudocompact'' is already in use in the topology literature:  A space $X$ is said to be pseudocompact if any continuous function $X\to \mathbb{R}$ is bounded.  There is no relationship between our notion and the previous notion and thus this should not be a source of confusion.
\end{rmk}

\begin{rmk}
If $L$ is a many-sorted language, then an $L$-structure is said to be finite (resp.\ compact) if the underlying universe of each sort is finite (resp.\ compact), regardless of the number of sorts.  Then one defines pseudofinite and pseudocompact $L$-structures exactly as in the above definition.  
\end{rmk}

Clearly pseudofinite structures are pseudocompact.

\begin{lemma}\label{core-equiv}
The following are equivalent:
\begin{enumerate}
\item $M$ is pseudofinite (resp.\ pseudocompact);
\item For any $L$-sentence $\sigma$, if $\sigma^M=0$, then for any $\epsilon>0$, there is a finite (resp.\ compact) $L$-structure $A$ such that $\sigma^A\leq \epsilon$;
\item There is a set $\{A_i \colon i\in I\}$ of finite (resp.\ compact) $L$-structures and an ultrafilter $\mathcal{U}$ on $I$ such that $M\equiv \prod_\mathcal{U} A_i$.
\end{enumerate}
\end{lemma}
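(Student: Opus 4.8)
The plan is to prove the cyclic chain of implications $(1)\Rightarrow(2)\Rightarrow(3)\Rightarrow(1)$, handling the pseudofinite and pseudocompact cases in parallel since the argument depends on the class of structures (finite or compact) only through closure properties that both classes share. Throughout I will write ``small'' to mean ``finite or compact'' as appropriate, so that a single argument covers both statements.

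For $(1)\Rightarrow(2)$, I would argue by contraposition. Suppose $(2)$ fails for some sentence $\sigma$ with $\sigma^M=0$: there is an $\epsilon>0$ such that $\sigma^A\geq\epsilon$ for every small structure $A$. The key point is to manufacture from $\sigma$ a new sentence $\tau$ that vanishes on all small structures but not on $M$, contradicting $(1)$. The natural candidate is $\tau=(\epsilon\dotminus\sigma)$, or more robustly a truncation built from $\sigma$ using $\dotminus$: on every small $A$ we have $\sigma^A\geq\epsilon$, so $\tau^A=0$, while $\sigma^M=0$ forces $\tau^M=\epsilon>0$. Thus $\tau$ witnesses the failure of $(1)$.

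For $(3)\Rightarrow(1)$, I would use the fundamental theorem of ultraproducts in continuous logic (\L o\'s's theorem): if $\sigma$ is a sentence with $\sigma^A=0$ for all small $A$, then in particular $\sigma^{A_i}=0$ for every $i\in I$, so $\sigma^{\prod_\mathcal{U}A_i}=\lim_\mathcal{U}\sigma^{A_i}=0$, and since $M\equiv\prod_\mathcal{U}A_i$ we get $\sigma^M=0$. This direction is essentially immediate once \L o\'s's theorem is invoked, and it is the same in both cases because a (nonprincipal or arbitrary) ultraproduct computes sentence values as ultralimits regardless of the size of the factors.

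The main obstacle, and the step I would spend the most care on, is $(2)\Rightarrow(3)$. The strategy is a standard compactness/ultraproduct construction. Enumerate all $L$-sentences $\sigma$ with $\sigma^M=0$ together with all rationals $\epsilon>0$; by $(2)$, for each such pair $(\sigma,\epsilon)$ there is a small structure witnessing $\sigma^A\leq\epsilon$. I would take the index set $I$ to consist of finite sets of such constraints (or pairs of a finite set $\Sigma$ of sentences and an $\epsilon$), choose for each $i\in I$ a single small $A_i$ simultaneously satisfying all finitely many constraints in $i$ (this requires closure of the small structures under finite ``combination'' so that one structure handles finitely many sentences at once up to $\epsilon$, which is where finiteness versus compactness must be checked carefully), and then let $\mathcal{U}$ be an ultrafilter on $I$ containing, for each constraint, the set of indices whose label includes that constraint. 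The verification that $\prod_\mathcal{U}A_i\equiv M$ then amounts to showing $\sigma^{\prod_\mathcal{U}A_i}=0$ for exactly the sentences $\sigma$ with $\sigma^M=0$; the forward inclusion follows from the Fr\'echet-like filter construction forcing the ultralimit of $\sigma^{A_i}$ to $0$, and the reverse uses that continuous elementary equivalence is determined by the zeroset of sentence-values. The delicate points are ensuring a single small $A_i$ can meet finitely many $\epsilon$-constraints at once and confirming that the resulting equivalence is genuinely two-sided rather than one-directional.
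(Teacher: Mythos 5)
Your overall route is the same as the paper's---the cycle $(1)\Rightarrow(2)\Rightarrow(3)\Rightarrow(1)$, the sentence $\epsilon\dotminus\sigma$ for the first implication, \L o\'s's theorem for the third, and a finite-sets-of-constraints ultraproduct for the second---and your first and third implications are complete as written. The genuine gap is precisely the step you flag but do not resolve in $(2)\Rightarrow(3)$: the existence, for each finite set of constraints $\{(\sigma_1,\epsilon_1),\ldots,(\sigma_m,\epsilon_m)\}$, of a \emph{single} finite (resp.\ compact) structure meeting all of them. Condition $(2)$ by itself only produces a separate witness for each individual constraint. You propose to get a common witness from ``closure of the small structures under finite combination,'' but no such closure property of the classes of finite or compact structures is available or needed, and looking for one (via products, unions, or amalgams of structures) is a wrong turn: combining structures gives no control over the values of several unrelated sentences simultaneously. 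The correct move is to combine the \emph{sentences}, not the structures: $\max$ is a connective of continuous logic, so if $\sigma_1,\ldots,\sigma_m$ all vanish in $M$, then the single sentence $\max(\sigma_1,\ldots,\sigma_m)$ vanishes in $M$, and applying $(2)$ to this one sentence with $\epsilon:=\min(\epsilon_1,\ldots,\epsilon_m)$ yields one finite (resp.\ compact) $A$ with $\sigma_j^A\leq\epsilon\leq\epsilon_j$ for every $j$. This is exactly what the paper does (its indices are pairs $(\Delta,k)$ with $\Delta$ a finite subset of $\operatorname{Th}(M)$, and the witness $A_{\Delta,k}$ satisfies $\max\Delta\leq\frac{1}{k}$), and it explains why the argument is uniform in the two cases: the uniformity comes from the logic, not from any shared closure property of the two classes of structures.

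With that repair, the rest of your construction goes through essentially as in the paper: the sets of indices whose label contains a given constraint have the finite intersection property (the union of two labels is again a label), so the desired ultrafilter exists, and for the resulting ultraproduct $N$ one gets $\sigma^N\leq\epsilon$ for every $\epsilon$, hence $\sigma^N=0$, whenever $\sigma^M=0$. For the ``two-sided'' worry at the end: one-directional zeroset inclusion already implies $M\equiv N$, since if $\sigma^M=r$ then $|\sigma-r|$ is a sentence vanishing in $M$, hence in $N$, so $\sigma^N=r$; the paper likewise verifies only this one direction.
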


As usual, (3) together with the Keisler--Shelah Theorem (see \cite{HI} for a proof in the context of the approximate semantics predating continuous logic), provides a purely algebraic and logic-free characterization of pseudofinite (resp.\ pseudocompact) structures, namely, as those which have some ultrapower isomorphic to some ultraproduct of finite (resp.\ compact) structures.

\begin{proof}
(1)$\Rightarrow$(2):  Suppose that (1) holds but (2) fails.  Then there is an $L$-sentence $\sigma$ and an $\epsilon>0$ such that $\sigma^M=0$ but $\sigma^A\geq \epsilon$ for all finite (resp.\ compact) $L$-structures $A$.  But then $(\epsilon\dotminus \sigma)^A=0$ for all such $A$, whence $\sigma^M\geq \epsilon$ by (1), which is a contradiction. Note for future reference that the converse is similar: Suppose that (2) holds and that $\sigma$ is such that $\sigma^A=0$ for all finite $L$-structures $A$, yet $\sigma^M=:r>0$. Consider $\epsilon\in(0,r)$; then $|\sigma^A-r|\leq \epsilon$ for some such $A$, whence $\sigma^A>0$, a contradiction.

(2)$\Rightarrow$(3):  Assume that (2) holds.  Let $T=\operatorname{Th}(M)$ and let $J$ be the collection of finite subsets of $T$.  For each $\Delta\in J$ and $k\in \mathbb{N}^{>0}$, let $A_{\Delta,k}$ be a finite (resp.\ compact) $L$-structure such that $A_{\Delta,k}\models \max \Delta\leq \frac{1}{k}$, and let \[ X_{\Delta,k}=\{(\Gamma,l)\in J\times \mathbb{N}^{>0} \colon A_{\Gamma,l}\models \max \Delta\leq \tfrac{1}{k}\}. \]  Note that $\{X_{\Delta,k} \colon (\Delta,k)\in J\times \mathbb{N}^{>0}\}$ has the finite intersection property:  given $\Delta_1,\ldots,\Delta_m\in J$ and $k_1,\ldots,k_m\in \mathbb{N}^{>0}$, we have that \[ (\Delta,k)\in X_{\Delta_1,k_1}\cap \ldots \cap X_{\Delta_m,k_m}, \] where $\Delta=\Delta_1\cup \cdots \cup \Delta_m$ and $k=\max(k_1,\ldots,k_m)$.  Let $\mathcal{U}$ be an ultrafilter on $J\times \mathbb{N}^{>0}$ extending $\{X_{\Delta,k} \colon (\Delta,k)\in J\times \mathbb{N}^{>0}\}$.  Set $N:=\prod_\mathcal{U} A_{\Delta,k}$.  We claim that $M\equiv N$.  To see this, suppose that $\sigma^M=0$ and take any $k>0$. Then $X_{\{\sigma\},k}\in\mathcal{U}$, so that $\sigma^N\leq \frac{1}{k}$.  Since $k>0$ is arbitrary, we have that $\sigma^N=0$.

(3)$\Rightarrow$(1) is clear.
\end{proof}

We have a preservation result, whose proof is similar to that for classical logic:

\begin{lemma}
Any ultraproduct of pseudofinite $L$-structures is pseudofinite. If $M$ is pseudofinite, so is any L-structure elementarily equivalent to $M$, any reduct of $M$ to a sublanguage of $L$, and any expansion of $M$ by constants. The analogous statements for pseudocompactness also hold.
\end{lemma}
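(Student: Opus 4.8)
The plan is to treat the four clauses separately, leaning on the characterizations in Lemma~\ref{core-equiv} and on the continuous \L o\'s theorem, and then to observe at the end that every argument survives verbatim when ``finite'' is replaced by ``compact''. For the ultraproduct clause I would argue directly from the definition: if each $M_i$ is pseudofinite and $\sigma$ is an $L$-sentence with $\sigma^A=0$ on all finite $A$, then $\sigma^{M_i}=0$ for every $i$, so by \L o\'s's theorem $\sigma^{\prod_\mathcal{U} M_i}=\lim_\mathcal{U}\sigma^{M_i}=0$; hence the ultraproduct is pseudofinite. (Note that one does \emph{not} claim the ultraproduct is finite—it generally is not.) The elementary-equivalence clause is then immediate, since the defining property ``$\sigma^M=0$ whenever $\sigma$ vanishes on all finite structures'' depends only on $\operatorname{Th}(M)$; alternatively one invokes clause~(3) of Lemma~\ref{core-equiv}, as $N\equiv M\equiv\prod_\mathcal{U}A_i$.

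For reducts, let $L_0\subseteq L$ and let $M_0$ be the $L_0$-reduct of the pseudofinite structure $M$. The key observation is that an $L_0$-sentence $\sigma$ takes the same value in any $L$-structure and in its $L_0$-reduct. So suppose $\sigma^{B}=0$ for every finite $L_0$-structure $B$. Given a finite $L$-structure $A$, its $L_0$-reduct $A_0$ is again finite, whence $\sigma^A=\sigma^{A_0}=0$; thus $\sigma$, viewed as an $L$-sentence, vanishes on all finite $L$-structures, and pseudofiniteness of $M$ gives $\sigma^{M_0}=\sigma^M=0$.

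The expansion-by-constants clause is the one requiring a genuine idea, and I expect it to be the main obstacle. It suffices to treat finitely many new constants, since any continuous sentence mentions only finitely many symbols. Write $M'=(M,\bar a)$ for the expansion of $M$ to $L'=L\cup\{\bar c\}$, and let $\sigma$ be an $L'$-sentence vanishing on all finite $L'$-structures. Replacing the new constants $\bar c$ by a tuple of variables $\bar x$ yields an $L$-formula $\sigma(\bar x)$, and I would form the $L$-sentence $\theta:=\sup_{\bar x}\sigma(\bar x)$. The point is that the finite $L'$-structures are precisely the pairs $(A,\bar b)$ with $A$ a finite $L$-structure and $\bar b$ a tuple from $A$, and $\sigma^{(A,\bar b)}=\sigma(\bar b)^A$; hence $\theta^A=\sup_{\bar b}\sigma(\bar b)^A=0$ for every finite $L$-structure $A$. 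Pseudofiniteness of $M$ then forces $\theta^M=0$, and in particular $\sigma^{M'}=\sigma(\bar a)^M\le\theta^M=0$, as required.

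Finally, the pseudocompact analogues follow by the identical arguments: \L o\'s's theorem is insensitive to the replacement, reducts of compact structures are compact, and $(A,\bar b)$ is compact whenever $A$ is, so each of the three structural facts used above has a compact counterpart. No step distinguishes between the two notions.
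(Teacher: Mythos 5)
Your proposal is correct and follows essentially the same route as the paper: the paper likewise treats the ultraproduct, elementary-equivalence, and reduct clauses as routine transcriptions of the classical arguments, and handles the only delicate clause—expansion by constants—exactly as you do, by replacing the new constants with fresh variables and applying pseudofiniteness to $\sup_{\bar x}\sigma(\bar x)$, using that finite (resp.\ compact) structures in the expanded language are precisely the pairs $(A,\bar b)$. Your write-up is simply a more detailed version of the paper's proof, with no substantive divergence.
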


\begin{proof}
The only statement whose proof is not identical to that in classical logic is the one about expansion by constants, yet the procedure is similar: Given a sentence $\sigma$ in the expanded language, replace the new constant symbols by fresh variables $x$, thus obtaining an $L$-formula $\varphi(x)$. Assume that $\sigma^B=0$ for every finite structure $B$ in the expanded language. Any such $B$ equals $(A,a)$ for some $L$-structure $A$ and some sequence of parameters $a$ in $A$. Note then $[\sup_x\varphi(x)]^A=0$ for any finite $L$-structure $A$, since the choice of $a$ is arbitrary, hence $[\sup_x\varphi(x)]^M=0$. Now let $M'$ be any expansion of $M$ with interpretations for the new constant symbols: we have $\sigma^{M'}=0$.
\end{proof}

In the classical analog of Lemma \ref{core-equiv}, item (2) is replaced by the following statement: Whenever ${M}\models \sigma$, then ${A}\models \sigma$ for some finite structure ${A}$.  In other words, the approximate truth appearing in item (2) above is replaced by actual truth, and equivalence holds because of using the negation connective.  This motivates us to make the following definition:

\begin{df}
We say that $M$ is \emph{strongly pseudofinite} (resp.\ \emph{strongly pseudocompact}) if for any $L$-sentence $\sigma$ such that $\sigma^M=0$, there is a finite (resp.\ compact) $L$-structure $A$ such that $\sigma^A=0$.
\end{df}

In other words, $M$ is an addherence point of finite (resp.\ compact) structures, that is, for every $L$-sentence $\sigma$ such that $\sigma^M=0$, the ``neighborhood'' $\{N\colon\sigma^N=0\}$ contains a finite structure.

Observe that a finitely axiomatizable theory with no finite models cannot be strongly pseudofinite.

Of course, strongly pseudofinite structures are strongly pseudocompact, and Lemma~\ref{core-equiv} yields that each strong concept implies its corresponding plain version. See Examples~\ref{apaa} and~\ref{circle} for proved distinctions.

The following preservation lemma is almost as bold as the previous one: our examples (especially~\ref{circle}) will show that strong pseudofiniteness is not preserved under ultraproducts.

\begin{lemma}
If $M$ is strongly pseudofinite, so is any L-structure elementarily equivalent to $M$, any reduct of $M$ to a sublanguage of $L$, and any expansion of $M$ by constants. The analogous statements for pseudocompactness also hold.
\end{lemma}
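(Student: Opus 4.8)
The plan is to follow the template of the preceding preservation lemma, disposing of the first two operations immediately and concentrating on expansion by constants, where the passage from the plain to the strong notion forces us to replace a supremum by an infimum. First, strong pseudofiniteness of $M$ depends only on the set $\{\sigma : \sigma^M = 0\}$ of sentences forced to $0$ by $M$; if $N \equiv M$ this set is identical, so every finite witness for $M$ serves for $N$, and the transfer is trivial. For a reduct to a sublanguage $L_0 \subseteq L$, I would observe that an $L_0$-sentence $\sigma$ is also an $L$-sentence and that its value is unchanged by the reduct; strong pseudofiniteness of $M$ then supplies a finite $L$-structure $A$ with $\sigma^A = 0$, and the $L_0$-reduct of $A$ is a finite $L_0$-structure witnessing the same. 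Both parts thus amount to the closure of the class of finite (resp.\ compact) structures under these operations.

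The substance is expansion by constants, and I would argue as in the earlier lemma up to one decisive change. Let $M'$ expand $M$ by new constant symbols and let $\sigma$ be a sentence of the expanded language with $\sigma^{M'} = 0$. Replacing the new constants by fresh variables $x$ yields an $L$-formula $\varphi(x)$ with $\varphi^M(a) = 0$, where $a$ is the tuple interpreting the new constants in $M'$. Hence $[\inf_x \varphi(x)]^M = 0$, and since $\inf_x \varphi(x)$ is an $L$-sentence, strong pseudofiniteness of $M$ produces a finite $L$-structure $A$ with $[\inf_x \varphi(x)]^A = 0$.

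The decisive point — the only genuine departure from the plain case — is that this infimum must actually be \emph{witnessed}. In a finite structure $A$ the set $A^n$ is finite, so $[\inf_x \varphi(x)]^A = 0$ is attained at some $b \in A^n$ with $\varphi^A(b) = 0$; interpreting the new constants as $b$ yields a finite expanded structure $(A,b)$ with $\sigma^{(A,b)} = 0$, as required. The strongly pseudocompact case is identical, the attainment now following because $\varphi^A$ is a continuous function on the compact space $A^n$ and so reaches its infimum. I expect no serious obstacle beyond isolating this attainment step — precisely the feature the plain preservation argument never required, since there a supremum forced to $0$ already bounds the single value one cares about, whereas here one needs an actual point achieving the infimum.
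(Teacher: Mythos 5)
Your proposal is correct and follows essentially the same route as the paper: for expansion by constants, the paper likewise replaces the new constants by fresh variables, quantifies them with $\inf$, and invokes the fact that $\inf$ quantifiers are actually realized in finite (resp.\ compact) structures, which is exactly your attainment step. The elementary-equivalence and reduct cases are, as you say, immediate, matching the paper's remark that those parts are identical to the classical argument.
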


\begin{proof}
Again, the only proof not identical to that in classical logic is the one about expansion by constants, yet it is similar: Given any sentence in the expanded language, replace the new constant symbols by fresh variables and quantify over each of them using the $\inf$ quantifier, then recall that $\inf$ quantifiers are actually realized in finite or compact structures.
\end{proof}

Next, we prove that pseudofiniteness and pseudocompactness are preserved when adding imaginaries.  We follow the approach to imaginaries (as well as the notation) from \cite[Sec.~5]{BBHU}, which we briefly recall here.  Given a family $(\varphi_n(x,y_n)\colon n\in\mathbb{N})$ of $L$-formulae, we consider the definable predicate $\psi(x,Y):=\mathcal{F}\lim \varphi_n$, which is the forced limit of these formulae.  For each such definable predicate $\psi(x,Y)$, we add an imaginary sort $(S_\psi,d_\psi)$ for canonical parameters of instances of $\psi$; given a tuple $b$ of sort $Y$ in some $L$-structure $M$, we let $[b]_\psi$ denote its image in $M^{\textup{eq}}$ of sort $S_\psi$.  We also add a predicate symbol $P_\psi(x,z)$ (where $z$ is of sort $S_\psi$), whose interpretation will satisfy $P_\psi(x,[b]_\psi)=\psi(x,b)$, and predicate symbols $\gamma_{\psi,n}$ which are approximations to the graph of the quotient map between tuples and their equivalence classes in $S_\psi$.  We let $L^{\textup{eq}}$ denote the resulting language and let $T_0^{\textup{eq}}$ denote the $L^{\textup{eq}}$-theory axiomatizing the properties of the new symbols (so the models of $T_0^{\textup{eq}}$ are precisely the $\textup{eq}$-expansions of $L$-structures).  \emph{We emphasize that this approach to imaginaries is independent of any ambient $L$-structure.}

Given a finite or countable tuple ${b}$ and $m>0$, we set ${b}|_m$ to be the truncation of $b$ to the first $m$ elements; we also refer to ${b}|_m$ simply as a \emph{truncation} of ${b}$.

\begin{lemma}
Let $\varphi(u,v_1,\ldots,v_n)$ be an $L^{{\textup{eq}}}$-formula, where $u$ is a tuple of variables from $L$ and $v_1,\ldots,v_n$ are variables from imaginary sorts.  Given $\epsilon>0$, there is an $L$-formula $\varphi'(u,v^1,\ldots,v^n)$ such that for all $L$-structures $M$ and $[b_i]\in M^{{\textup{eq}}}$ of the same sort as $v_i$, there are truncations $b_i'$ such that \[ M^{{\textup{eq}}}\models \sup_u |\varphi(u,[b_1],\ldots,[b_n])-\varphi'(u,{b}_1',\ldots,{b}_n')|\leq \epsilon. \]  In particular, for any $L^{{\textup{eq}}}$-sentence $\sigma$ and any $\epsilon>0$, there is an $L$-sentence $\sigma'$ such that $T_0^{{\textup{eq}}}\models |\sigma-\sigma'|\leq \epsilon$.
\end{lemma}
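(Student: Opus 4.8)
The plan is to induct on the complexity of the $L^{\textup{eq}}$-formula $\varphi$, producing at each stage an $L$-formula that agrees with $\varphi$ within the allotted error once the imaginary arguments are replaced by truncations of representative tuples. The role of the truncations is forced on us by the very construction of the imaginary sorts: a generator $\psi(x,Y)=\mathcal{F}\lim_n\varphi_n(x,Y)$ is a uniform limit of honest $L$-formulae $\varphi_n$, and each $\varphi_n$ uses only finitely many of the (countably many) coordinates of $Y$, so that $\varphi_n(x,b)$ depends only on a truncation $b|_{m_n}$. The defining property of the forced limit supplies a bound $\sup_{x}|\psi(x,b)-\varphi_n(x,b)|\leq r(n)$ with $r(n)\to 0$ that is uniform across all $L$-structures; thus, given $\epsilon$, I would fix $N$ with $r(N)\leq\epsilon$ and use $\varphi_N$ as the $L$-approximant of $\psi$.

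For the base case I would treat the atomic $L^{\textup{eq}}$-formulae. The only ones involving imaginary variables are built from the predicates $P_\psi$, the sort metrics $d_\psi$, and the graph approximations $\gamma_{\psi,n}$. For $P_\psi$ one has $\sup_u|P_\psi(u,[b]_\psi)-\varphi_N(u,b|_{m_N})|=\sup_u|\psi(u,b)-\varphi_N(u,b)|\leq r(N)\leq\epsilon$. Since the metric on $S_\psi$ is the one induced by $\psi$, namely $d_\psi([b]_\psi,[c]_\psi)=\sup_x|\psi(x,b)-\psi(x,c)|$, and each $\gamma_{\psi,n}$ is in turn a definable function of $d_\psi$, these reduce to the same approximation applied to $\psi$, yielding $L$-formulae in truncations of the representatives. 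One extra point must be dispatched: in the completed imaginary sort an element need not be an honest class $[b]_\psi$, but honest classes are dense and the predicates are uniformly continuous, so I would first replace each free imaginary argument by a genuine class $[b_i]_\psi$ within the error budget and then truncate that representative. This is what licenses the phrase ``there are truncations $b_i'$'' in the statement.

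For the inductive step, a connective $f(\varphi_1,\ldots,\varphi_k)$ is handled by uniform continuity of $f$: approximate each $\varphi_j$ within $\epsilon'$, use the common truncation length $m=\max_j m_j$ (a shorter approximant simply ignores the surplus coordinates), and choose $\epsilon'$ small enough that the modulus of continuity of $f$ absorbs the accumulated error. Quantifiers split into two cases. If the bound variable is real, then $(\sup_w\varphi)':=\sup_w\varphi'$ works, since $|\sup_w f-\sup_w g|\leq\sup_w|f-g|$ and dually for $\inf$. If the bound variable $w$ is imaginary, of sort $S_\chi$, I would first use density of honest classes together with uniform continuity in $w$ to rewrite $\sup_{w\in S_\chi}\varphi$ as $\sup_c\varphi(\ldots,[c]_\chi,\ldots)$ with $c$ ranging over real tuples of the corresponding parameter sort; the induction hypothesis then replaces the integrand by an $L$-formula depending only on a truncation $c|_m$, so the quantifier collapses to an honest $\sup$ over a real tuple variable of length $m$. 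I expect this imaginary-quantifier case, together with the bookkeeping needed to keep the density approximation and the truncation lengths uniform over all $M$, to be the main obstacle.

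Finally, the ``in particular'' clause is the instance in which $\varphi$ is a sentence: there are then no free variables, so eliminating all imaginary material through the induction produces an $L$-sentence $\sigma'$ with $|\sigma-\sigma'|\leq\epsilon$ holding in every $\textup{eq}$-expansion of an $L$-structure, which is exactly to say $T_0^{\textup{eq}}\models|\sigma-\sigma'|\leq\epsilon$.
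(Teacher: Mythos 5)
Your proposal is correct and follows essentially the same route as the paper's proof: induction on the complexity of $\varphi$, with forced-limit approximants $\varphi_N$ applied to truncated representatives handling the atomic cases $P_\psi$, $d_\psi$, $\gamma_{\psi,n}$, uniform continuity handling connectives, real-variable quantifiers passing through directly, and imaginary-sort quantifiers replaced by quantifiers over real tuples of the truncation length. Your explicit appeal to density of honest classes $[b]_\psi$ in the completed imaginary sorts (for the imaginary-quantifier step) is a point the paper leaves implicit, but otherwise the two arguments coincide.
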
 

\begin{proof}
The proof is by induction on the complexity of $\varphi$.  First suppose that $\varphi$ is atomic.  Without loss of generality, we may suppose that $\varphi$ is not an $L$-formula.  Thus, there is a definable predicate $\psi({x},Y)=\mathcal{F}\lim(\varphi_n(x,y_n))$ such that $\varphi$ is of the form: \[ P_\psi({x},z),\ d_\psi(z,z^*),\ \text{or}\ \gamma_{\varphi_n,\psi}({y_n},z). \]  In the first case, choose $N$ such that $2^{-N}\leq \epsilon$.  Then \[ M^{{\textup{eq}}}\models \sup_{{x}}|P_\psi({x},[b])-\varphi_N({x},{b|_N})|\leq \epsilon. \]  In the second case, choose $N$ so that $2^{-N+1}\leq \epsilon$.  Then \[ M^{{\textup{eq}}}\models\Bigl|d([b],[b^*])-\sup_{{x}}|\varphi_N({x},{b|_N})-\varphi_N({x},{b^*|_N})|\Bigr|\leq \epsilon. \]  Finally, for the third case, choose $N$ so that $2^{-N}\leq \epsilon$.  Then \[ M^{{\textup{eq}}}\models \sup_{{y_n}}\Bigl|\gamma_{\varphi_n}({y_n},[b])-\sup_{{x}}|\varphi_n({x},{y_n})-\varphi_N({x},{b|_N})\Bigr|\leq \epsilon. \] The connective step of the proof follows immediately from uniform continuity.  There are two quantifier cases to consider.  First suppose that $\varphi(u,v_1,\ldots,v_n)=\inf_w \chi(u,w,v_1,\ldots,v_n)$, where $w$ is a variable of $L$.  Let $\chi'(u,w,v^1,\ldots,v^n)$ be as in the conclusion of the lemma for $\chi$ and $\epsilon$.  Then set $\varphi'(u,v^1,\ldots,v^n):=\inf_w \chi'(u,w,v^1,\ldots,v^n)$.  Now assume $\varphi(u,v_1,\ldots,v_n)=\inf_w \chi(u,v_1,\ldots,v_n,w)$, where $w$ is an imaginary variable.  Let $\chi'(u,v^1,\ldots,v^n,w')$ be an $L$-formula satisfying the conclusion of the lemma for $\chi$ and $\epsilon$.  Then set $\varphi'(u,v^1,\ldots,v^n):=\inf_{w'}\chi'(u,v^1,\ldots,v^n,w')$.
\end{proof}

In order to make the desired preservation result true, we need to make a slight modification to the eq-construction for finite structures.  Suppose that $A$ is a finite $L$-structure and $\psi({x},Y)$ is a definable predicate with $Y$ a countable tuple of parameter variables. Observe then that there is a \emph{finitary} definable predicate $\widetilde{\psi}({x},{y})$ (that is, ${y}$ is a finite tuple) such that $\psi$ and $\widetilde{\psi}$ are logically equivalent.  Consequently, there is no need to add the sort $S_\psi$ if one adds the sort $S_{\widetilde{\psi}}$.  Thus, we insist that the eq-construction for finite structures only add sorts for canonical parameters of finitary imaginaries.  In this way, $A^{{\textup{eq}}}$ is once again a finite structure.  Observe that if $A$ is a compact structure, then $A^{{\textup{eq}}}$ is also a compact structure.

\begin{prop}
If $M$ is pseudofinite (resp.\ pseudocompact), then so is $M^{{\textup{eq}}}$.
\end{prop}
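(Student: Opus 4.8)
The plan is to verify condition (2) of Lemma~\ref{core-equiv} directly for $M^{\textup{eq}}$, rather than attempting to show that the $\textup{eq}$-construction commutes with ultraproducts (which would run into the mismatch between the full $\textup{eq}$-sorts and the finitary ones used for finite structures). The two tools are the approximation lemma just proved — every $L^{\textup{eq}}$-sentence $\sigma$ lies, modulo $T_0^{\textup{eq}}$, within any prescribed $\epsilon$ of some honest $L$-sentence $\sigma'$ — and the modified $\textup{eq}$-construction, which guarantees that $A^{\textup{eq}}$ is finite (resp.\ compact) whenever $A$ is.

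Concretely, I would fix an $L^{\textup{eq}}$-sentence $\sigma$ with $\sigma^{M^{\textup{eq}}}=0$ together with $\epsilon>0$, and aim to produce a finite (resp.\ compact) $L^{\textup{eq}}$-structure $B$ with $\sigma^B\leq\epsilon$. First apply the approximation lemma with error $\epsilon/3$ to obtain an $L$-sentence $\sigma'$ with $T_0^{\textup{eq}}\models|\sigma-\sigma'|\leq\epsilon/3$. Since $M^{\textup{eq}}\models T_0^{\textup{eq}}$ and $\sigma'$ is an ordinary $L$-sentence, its value is unchanged in passing from $M$ to $M^{\textup{eq}}$, so $(\sigma')^M=(\sigma')^{M^{\textup{eq}}}\leq\epsilon/3$.

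The next step converts this near-zero value into a genuine zero so that pseudofiniteness of $M$ can be invoked. Set $\tau:=\sigma'\dotminus(\epsilon/3)$, an $L$-sentence; then $\tau^M=0$ because $(\sigma')^M\leq\epsilon/3$. Pseudofiniteness of $M$ in the form of Lemma~\ref{core-equiv}(2) yields a finite (resp.\ compact) $L$-structure $A$ with $\tau^A\leq\epsilon/3$, and hence $(\sigma')^A\leq2\epsilon/3$. Now take $B:=A^{\textup{eq}}$, which is finite (resp.\ compact) by the modified construction and which models $T_0^{\textup{eq}}$; applying the approximation bound once more gives $\sigma^B\leq(\sigma')^B+\epsilon/3=(\sigma')^A+\epsilon/3\leq\epsilon$. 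As $\sigma$ and $\epsilon$ were arbitrary, $M^{\textup{eq}}$ satisfies condition (2) of Lemma~\ref{core-equiv}, and therefore (by the equivalence established there, including the converse direction noted in its proof) $M^{\textup{eq}}$ is pseudofinite (resp.\ pseudocompact).

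The step requiring the most care is the error bookkeeping: the three appearances of $\epsilon/3$ — in approximating $\sigma$ at $M$, in the truncation defining $\tau$, and in transferring the approximation to $A^{\textup{eq}}$ — must be arranged so that the accumulated error stays at most $\epsilon$, and one must consistently use that an $L$-sentence takes the same value in a structure and in its $\textup{eq}$-expansion. The genuine conceptual crux, however, is not a computation but the fact secured immediately before the statement, namely that the modified $\textup{eq}$-construction keeps $A^{\textup{eq}}$ finite for finite $A$; without it the witness $B=A^{\textup{eq}}$ would fail to be finite and the argument would break down entirely.
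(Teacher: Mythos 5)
Your proposal is correct and follows essentially the same route as the paper's own proof: approximate $\sigma$ by an $L$-sentence $\sigma'$ via the preceding lemma, transfer the bound to $M$, invoke pseudofiniteness of $M$ to get a finite (resp.\ compact) witness $A$, and use the modified construction so that $A^{\textup{eq}}$ serves as the required finite (resp.\ compact) $L^{\textup{eq}}$-structure. The only difference is cosmetic bookkeeping ($\epsilon/3$ splits versus the paper's $3\epsilon$ endpoint), and you helpfully make explicit the truncation $\sigma'\dotminus(\epsilon/3)$ that the paper leaves implicit when it passes from $(\sigma')^M\leq\epsilon$ to a finite structure with $(\sigma')^A\leq 2\epsilon$.
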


\begin{proof}
Suppose that $M$ is pseudofinite and $M^{{\textup{eq}}}\models \sigma=0$.  Fix $\epsilon>0$ and let $\sigma'$ be as in the lemma. Then $M\models \sigma'\leq \epsilon$. Then there is a finite (resp.\ compact) $L$-structure $A$ such that $A\models \sigma'\leq 2\epsilon$. Since also $A^{{\textup{eq}}}\models |\sigma-\sigma'|\leq \epsilon$, we have $A^{{\textup{eq}}}\models \sigma\leq 3\epsilon$.
\end{proof}

\begin{question}
Are the notions of strong pseudofiniteness and strong pseudocompactness preserved when adding imaginaries? More generally, how natural is strong pseudofiniteness in continuous logic, which relies heavily on approximations?
\end{question}

The next two sections present a list of many examples of pseudofinite and pseudocompact metric structures.

\section{The case of a classical structure}\label{sec-class}

In this section, we let $L$ denote a signature for classical logic.  In order for us to treat $L$ as a signature for continuous logic, we must also specify a modulus of uniform continuity for each symbol.  For each function and predicate symbol, we use $\Delta(\epsilon):=\epsilon$ as our modulus of uniform continuity.  Note that any classical $L$-structure, when equipped with the discrete metric, is a metric $L$-structure.

For the rest of this section, let $M$ denote a classical $L$-structure.  If $M$ is pseudofinite in the classical sense, then we will say that $M$ is \emph{classically pseudofinite}.  Thus, when we say that $M$ is pseudofinite, we are considering $M$ as a metric $L$-structure.  It is not immediately clear that $M$ is classically pseudofinite if and only if $M$ is pseudofinite as there are metric $L$-structures that are not the result of viewing classical $L$-structures as metric structures.  Nonetheless, in this section, we will prove:

\begin{thm}\label{classical}
Given a classical $L$-structure, the five notions \emph{classically pseudofinite,} \emph{pseudofinite,} \emph{strongly pseudofinite,} \emph{pseudocompact,} and \emph{strongly pseudocompact} coincide.
\end{thm}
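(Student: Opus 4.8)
The plan is to prove the cycle of implications among the five notions, using the fact that each strong notion implies its plain version (Lemma~\ref{core-equiv}) and that pseudofiniteness implies pseudocompactness trivially. Concretely, it suffices to establish the chain
\[
\text{classically pseudofinite} \Rightarrow \text{strongly pseudofinite} \Rightarrow \text{pseudofinite} \Rightarrow \text{pseudocompact} \Rightarrow \text{classically pseudofinite},
\]
since strong pseudocompactness is squeezed between strong pseudofiniteness and pseudocompactness once we know everything collapses. The only two arrows requiring genuine argument are the first and the last; the middle two are immediate from the remarks already in the excerpt.

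For \emph{classically pseudofinite} $\Rightarrow$ \emph{strongly pseudofinite}, I would take a metric $L$-sentence $\sigma$ with $\sigma^M = 0$ and produce a \emph{finite} classical $L$-structure $A$ with $\sigma^A = 0$. The key observation is that, since the connectives and quantifiers of continuous logic applied to $\{0,1\}$-valued atomic formulae (with the discrete metric) can only output values in a finite subset of $[0,1]$ determined by $\sigma$, and since a classical structure gives atomic formulae only the values $0$ and $1$, the condition $\sigma^M = 0$ is witnessed by a classical $L$-formula: roughly, $\sigma^M = 0$ holds if and only if $M \models \widehat{\sigma}$ for an associated classical sentence $\widehat{\sigma}$ obtained by replacing continuous connectives with their Boolean behavior on $\{0,1\}$. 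By classical pseudofiniteness, $\widehat{\sigma}$ has a finite model $A$, and on that discrete $A$ we get $\sigma^A = 0$ exactly. This step is where the care flagged in the introduction (``this seems to require unusual attention'') is spent: one must verify that the discretization is faithful, i.e.\ that on discrete classical structures the continuous value $\sigma$ agrees with the Boolean truth of $\widehat\sigma$, which I would do by induction on the structure of $\sigma$, tracking that atomic values stay in $\{0,1\}$ under the chosen moduli.

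For \emph{pseudocompact} $\Rightarrow$ \emph{classically pseudofinite}, I would argue contrapositively or directly: suppose $M$ is not classically pseudofinite, so there is a \emph{classical} $L$-sentence $\theta$ with $M \models \theta$ but $\theta$ has no finite model. I would convert $\theta$ into a metric sentence $\sigma_\theta$ (again the discrete translation) so that $\sigma_\theta^M = 0$. The crux is to show that $\sigma_\theta^A > 0$ is bounded away from $0$ for \emph{all compact} $L$-structures $A$ — not merely finite ones — which, via Lemma~\ref{core-equiv}(2), contradicts pseudocompactness. The passage from ``no finite model'' to ``no compact model, uniformly'' is the heart of the matter and the step I expect to be the main obstacle.

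The main obstacle, then, is controlling compact metric $L$-structures that are \emph{not} discretizations of classical structures. A compact $L$-structure with the discrete metric is automatically finite, but a general compact structure may have a nondiscrete metric, and its atomic predicates may take intermediate values in $[0,1]$, so a classical sentence false in all finite classical structures could conceivably be approximately satisfied in some exotic compact metric structure. To handle this I would use the chosen modulus $\Delta(\epsilon) = \epsilon$ together with compactness to show that any compact $L$-structure can be approximated, in the relevant sentences up to arbitrarily small error, by a finite classical $L$-structure — for instance by taking a finite $\epsilon$-net and rounding predicate values to $\{0,1\}$ — thereby reducing the compact case to the finite classical case and closing the cycle. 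Making this rounding argument respect the moduli of continuity and the nesting of quantifiers in $\sigma$ is the delicate point, and I would isolate it as the technical lemma carrying the section.
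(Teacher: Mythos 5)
Your high-level plan is sound: the cycle classically pseudofinite $\Rightarrow$ strongly pseudofinite $\Rightarrow$ pseudofinite $\Rightarrow$ pseudocompact $\Rightarrow$ classically pseudofinite, with strong pseudocompactness squeezed between strongly pseudofinite and pseudocompact, is exactly the logical skeleton of the paper's proof. Your first arrow is also essentially correct and close to the paper's argument, with one imprecision worth flagging: on classical structures the values of continuous formulae do \emph{not} stay in $\{0,1\}$ (a connective such as $x\mapsto x/2$ immediately leaves $\{0,1\}$); what is true is that each continuous formula $\varphi$ has a \emph{finite} range $R_\varphi\subseteq[0,1]$ uniformly over all classical structures, and each level set $\{\,a:\varphi^A(a)=r\,\}$, $r\in R_\varphi$, is defined by a classical formula (this is the paper's Lemma~\ref{formula} together with \cite[9.21]{BBHU}). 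With that correction your translation $\sigma\mapsto\widehat{\sigma}$ exists and your first arrow goes through; the paper reaches the same conclusion via ultraproducts and the lemma that classical elementary equivalence implies metric elementary equivalence, which rests on the same two ingredients.

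The genuine gap is in your last arrow, pseudocompact $\Rightarrow$ classically pseudofinite. Your plan requires the following statement: for every classical sentence $\theta$ with no finite models there is $\epsilon>0$ such that $\widetilde{\theta}^A\geq\epsilon$ for \emph{all} compact metric $L$-structures $A$. This is strictly stronger than what the theorem needs (it makes no reference to $M$ at all), and your proposed proof of it --- take a finite $\epsilon$-net of $A$ and round predicate values to $\{0,1\}$ --- does not work. A classical structure must carry the discrete metric, so two distinct net points of $A$ at distance $\epsilon$ become points at distance $1$ in the rounded structure $B$; since the continuous transform turns equality atoms into distance atoms, this changes the values of subformulae of $\widetilde{\theta}$ by amounts no modulus controls, and predicates of $A$ taking values near $\tfrac12$ are rounded equally unfaithfully. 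So $\widetilde{\theta}^B$ need not be close to $\widetilde{\theta}^A$, and the ``exotic compact structure'' obstacle you yourself identified is fatal to this approach. The paper's proof avoids confronting arbitrary compact structures altogether: after replacing $M$ by an ultrapower so that it is $\omega$-saturated, it uses saturation plus discreteness of $M$ to produce moduli $\alpha$, $\alpha_P$ and hence continuous sentences $\tau$ and $\sigma_P$ (asserting, approximately, that the metric is discrete and the predicates are $\{0,1\}$-valued) which have value $0$ in $M$. Pseudocompactness is then applied to $\sigma':=\max(\widetilde{\sigma},(\sigma_P)_P,\tau)$, not to $\widetilde{\sigma}$ alone; the compact witness $A$ with $(\sigma')^A\leq\tfrac14$ is thereby forced to be approximately discrete and approximately crisp, and only for such $A$ is the quotient by $E=\{\,d\leq\tfrac14\,\}$ a well-defined finite classical structure $B$ on which the two-sided induction ($\widetilde{\varphi}^A(a)\leq\tfrac14\Rightarrow B\models\varphi([a])$ and $\widetilde{\varphi}^A(a)\geq\tfrac34\Rightarrow B\models\neg\varphi([a])$) is valid. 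This trick --- using saturation of $M$ to impose side conditions on the compact approximants before rounding --- is the idea missing from your proposal; without it, it is not even clear that the uniform lower bound you are trying to prove is true.
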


Towards proving this result, note that, since all five notions are invariant under elementary equivalence, we may replace $M$ with an ultrapower, thus reducing to the case that $M$ is $\omega$-saturated (as a metric structure).

One proves the next lemma by induction on the complexity of formulae:

\begin{lemma}\label{formula}
For any continuous $L$-formula $\varphi(x)$, there is finite $R_\varphi\subseteq [0,1]$ such that, for any \emph{classical} $L$-structure $A$ and any tuple $a$ from $A$, one has $\varphi^A(a)\in R_{\varphi}$.
\end{lemma}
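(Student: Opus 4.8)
The plan is to argue by induction on the complexity of $\varphi$, producing at each stage a finite set $R_\varphi \subseteq [0,1]$ that depends only on the syntactic form of $\varphi$ and not on the ambient classical structure $A$. This uniformity across all $A$ is the entire point, and it is what makes the quantifier step go through.

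For the base case, I would observe that every atomic formula takes values in $\{0,1\}$ on classical structures: a distance term $d(t_1,t_2)$ evaluates to $0$ or $1$ because classical structures are equipped with the discrete metric, and a predicate atom $P(t_1,\dots,t_n)$ evaluates to $0$ or $1$ because classical relations are interpreted as $\{0,1\}$-valued predicates. (The terms themselves evaluate to elements of $A$, so they contribute no real values.) Thus $R_\varphi := \{0,1\}$ works for atomic $\varphi$. For the connective step, suppose $\varphi = u(\varphi_1,\dots,\varphi_n)$ with $u \colon [0,1]^n \to [0,1]$ continuous and with $R_{\varphi_1},\dots,R_{\varphi_n}$ already constructed. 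Since $\varphi_i^A(a) \in R_{\varphi_i}$ for every classical $A$ and tuple $a$ by the inductive hypothesis, we get $\varphi^A(a) \in u\bigl(R_{\varphi_1} \times \cdots \times R_{\varphi_n}\bigr)$, and this image is finite, being the image of a finite set under a function. So I would set $R_\varphi := u\bigl(R_{\varphi_1} \times \cdots \times R_{\varphi_n}\bigr)$.

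For the quantifier step, which is really the only place where anything needs to be said, suppose $\varphi(x) = \sup_y \psi(x,y)$ (the $\inf$ case being symmetric) and $R_\psi$ already constructed. For any classical $A$ and tuple $a$, the value $\varphi^A(a) = \sup_{b \in A} \psi^A(a,b)$ is the supremum of the nonempty set $\{\psi^A(a,b) : b \in A\} \subseteq R_\psi$. Because $R_\psi$ is finite, this subset is finite, so its supremum is attained, i.e.\ equals its maximum, and therefore lies in $R_\psi$. Hence $R_\varphi := R_\psi$ suffices.

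The main obstacle here is conceptual rather than computational: one must notice that passing to a supremum over the possibly infinite domain $A$ does not enlarge the value set, precisely because the candidate values are all drawn from a fixed finite set. This is where the finiteness carefully maintained in the earlier cases pays off, and it is why the statement is phrased with a genuinely finite $R_\varphi$ rather than merely a bounded or closed one. Since $\varphi$ is a finite syntactic object, the induction terminates, and at each stage the constructed $R_\varphi$ is independent of $A$, giving the claim.
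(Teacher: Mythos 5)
Your proof is correct and follows exactly the route the paper indicates: induction on the complexity of $\varphi$, with atomic formulae valued in $\{0,1\}$ on classical structures, connectives handled by taking the (finite) image of a finite set, and quantifiers handled by noting that a supremum or infimum over a nonempty subset of a finite set is attained and hence stays in that set. The paper gives no further detail than ``induction on the complexity of formulae,'' so your write-up is a faithful and complete elaboration of the intended argument.
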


We should remark that it is essential that we assumed that the structures in the previous lemma are classical.  Indeed, if $\varphi(x,y)$ is $d(x,y)$, then as $A$ ranges over all metric $L$-structures, $\varphi^A$ can take on any value in $[0,1]$.

\begin{lemma}
Suppose that $A$ and $B$ are classical $L$-structures and that $A\equiv B$ as classical structures.  Then $A\equiv B$ as metric structures.
\end{lemma}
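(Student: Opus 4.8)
The plan is to show, by induction on the complexity of continuous $L$-formulae, that the value of any continuous formula on a classical structure is governed by a classical first-order condition. Precisely, I would prove the following claim: for every continuous $L$-formula $\varphi(x)$ and every $r$ in the finite set $R_\varphi$ supplied by Lemma~\ref{formula}, there is a \emph{classical} first-order $L$-formula $\theta_{\varphi,r}(x)$ such that, for every classical $L$-structure $A$ and every tuple $a$ from $A$, one has $\varphi^A(a)=r$ if and only if $A\models\theta_{\varphi,r}(a)$ in the classical sense. Granting the claim, the lemma follows immediately: given a continuous $L$-sentence $\sigma$, let $r:=\sigma^A\in R_\sigma$; then $A\models\theta_{\sigma,r}$ classically, so $B\models\theta_{\sigma,r}$ by classical elementary equivalence, whence $\sigma^B=r=\sigma^A$. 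As $\sigma$ is arbitrary, $A\equiv B$ as metric structures.

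For the base case, the atomic continuous formulae on a classical structure take values in $\{0,1\}$: with the discrete metric, $d(t_1,t_2)^A$ is $0$ when $t_1=t_2$ and $1$ otherwise, while a predicate instance $P(\bar t)^A$ is $0$ or $1$ according to whether $P(\bar t)$ holds classically. Thus $\theta_{\varphi,0}$ and $\theta_{\varphi,1}$ are, respectively, the classical atomic formula and its negation (or the equality and inequality in the metric case). For the connective step, write $\varphi=u(\varphi_1,\dots,\varphi_k)$ for a continuous connective $u$; since each $\varphi_i^A(a)$ lies in the finite set $R_{\varphi_i}$, the condition $\varphi^A(a)=r$ is equivalent to the finite disjunction, over all tuples $(r_1,\dots,r_k)\in R_{\varphi_1}\times\dots\times R_{\varphi_k}$ with $u(r_1,\dots,r_k)=r$, of the conjunctions $\bigwedge_i\theta_{\varphi_i,r_i}(x)$, and one takes $\theta_{\varphi,r}$ to be that finite Boolean combination.

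The quantifier step is where the finiteness from Lemma~\ref{formula} is essential, and I expect it to be the main obstacle. Consider $\varphi(x)=\sup_y\psi(x,y)$. Because $\psi^A(a,b)$ ranges over the finite set $R_\psi$ as $b$ varies, the supremum is in fact attained, so for $r\in R_\varphi$ one has $\varphi^A(a)=r$ exactly when some witness achieves the value $r$ and no witness exceeds it. The first requirement is the classical formula $\exists y\,\theta_{\psi,r}(x,y)$, and the second is $\forall y\bigvee_{s\in R_\psi,\,s\le r}\theta_{\psi,s}(x,y)$; their conjunction serves as $\theta_{\varphi,r}$. The case $\varphi=\inf_y\psi$ is dual. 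The delicate points are justifying that the sup and inf are genuinely a max and min — which rests precisely on the finiteness of the value set — and translating the order relation $\le r$ into a finite disjunction over $R_\psi$. The remaining work of verifying that the translation respects composition is routine once the inductive bookkeeping is in place.
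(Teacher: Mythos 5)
Your proposal is correct and takes essentially the same approach as the paper: the paper's own proof rests on exactly the claim you establish by induction --- that for each continuous $L$-formula $\varphi(x)$ and each $r\in R_\varphi$ there is a classical $L$-formula deciding $\varphi^A(a)=r$ uniformly over all classical structures --- except that the paper cites \cite[9.21]{BBHU} for this translation instead of proving it. Your self-contained induction (with the quantifier step justified by the finiteness of $R_\psi$, just as you flag) also renders the paper's preliminary reduction to a common elementary extension with $A\preceq B$ unnecessary, since applying the uniform translation to sentences yields the lemma for elementarily equivalent structures directly.
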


\begin{proof}
By passing to a common elementary extension, we reduce to the case that $A$ is an elementary substrcture of $B$ (as classical structures).  In this case, the lemma then follows from the previous lemma and \cite[9.21]{BBHU}, which is uniform over all classical structures. Indeed, given a continuous $L$-formula $\varphi(x)$ and $r\in R_\varphi$, there is a classical $L$-formula $\psi(x)$ such that, for any classical $L$-structure $A$ and for any suitable tuple $a$ from $A$, we have $\varphi^A(a)=r\Leftrightarrow A\models\psi(a)$.
\end{proof}

\begin{cor}
If $M$ is classically pseudofinite, then $M$ is strongly pseudofinite.
\end{cor}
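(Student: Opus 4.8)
The plan is to convert the continuous hypothesis $\sigma^M=0$ into a purely classical one to which classical pseudofiniteness can be applied directly. So let $\sigma$ be a continuous $L$-sentence with $\sigma^M=0$; I must produce a finite $L$-structure $A$ with $\sigma^A=0$. Since a finite classical structure is automatically a finite metric structure (under the discrete metric), it suffices to find such an $A$ among finite classical structures, and the whole argument stays on the classical side once the translation is in place.

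First I would invoke Lemma~\ref{formula} for the parameter-free sentence $\sigma$, obtaining a finite set $R_\sigma\subseteq[0,1]$ with $\sigma^A\in R_\sigma$ for every classical $L$-structure $A$. Because $M$ is itself classical and $\sigma^M=0$, we have $0\in R_\sigma$. I would then apply the value-to-formula translation exhibited in the proof of the preceding lemma, taking $r=0$: this yields a classical $L$-sentence $\psi$ such that, for every classical $L$-structure $A$, one has $\sigma^A=0$ if and only if $A\models\psi$. In particular, since $\sigma^M=0$, we obtain $M\models\psi$.

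Finally, since $M$ is classically pseudofinite and $M\models\psi$, there is a finite classical $L$-structure $A$ with $A\models\psi$; by the equivalence above, $\sigma^A=0$, as required. The only point needing care is that the translation must be available precisely at the value $r=0$, which is exactly what $0\in R_\sigma$ guarantees; everything else is a direct transfer between the classical and continuous readings of the same structure. I note that the reduction to the $\omega$-saturated case is not actually needed for this corollary, since the translation is uniform over all classical structures, and classical pseudofiniteness can be invoked at $M$ itself.
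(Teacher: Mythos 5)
Your proof is correct, but it takes a genuinely different route from the paper's. The paper argues via ultraproducts: classical pseudofiniteness gives finite classical structures $(A_i)$ and an ultrafilter $\mathcal{U}$ with $M\equiv\prod_{\mathcal{U}}A_i$ as classical structures; the lemma preceding the corollary upgrades this to elementary equivalence as \emph{metric} structures; \L o\'s's theorem then yields $\lim_{\mathcal{U}}\sigma^{A_i}=0$, and Lemma~\ref{formula} (finite range of values on classical structures) forces $\sigma^{A_i}=0$ for $\mathcal{U}$-almost all $i$. You instead work sentence-by-sentence: you use Lemma~\ref{formula} only to guarantee $0\in R_\sigma$, then invoke the uniform value-to-formula translation (the content of \cite[9.21]{BBHU}, which the paper also uses, but inside the proof of the equivalence lemma) at $r=0$ to produce a classical sentence $\psi$ with $\sigma^A=0\Leftrightarrow A\models\psi$ for all classical structures $A$, and then apply classical pseudofiniteness directly to $\psi$. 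Your route is more elementary: it avoids ultraproducts and \L o\'s's theorem entirely, isolates the single ingredient actually needed (the translation at the one value $r=0$), and you are also right that the reduction to the $\omega$-saturated case is not needed here --- the paper's own proof of this corollary does not use it either; that reduction serves other parts of the proof of Theorem~\ref{classical}. What the paper's route buys is the reuse of the elementary-equivalence lemma, which is of independent interest, and the marginally stronger conclusion that almost all of the $A_i$ in the ultraproduct witness $\sigma=0$.
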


\begin{proof}
Let there be finite classical $L$-structures $(A_i\colon i\in I)$ and an ultrafilter $\mathcal{U}$ on $I$ such that $M\equiv \prod_{\mathcal{U}} A_i$ (a classical ultraproduct).  By the previous lemma, $M\equiv \prod_{\mathcal{U}} A_i$ as metric structures.  Consequently, if $\sigma$ is a continuous $L$-sentence such that $\sigma^M=0$, then $\lim_{\mathcal{U}} \sigma^{A_i}=0$.  By Lemma \ref{formula}, $\sigma^{A_i}=0$ for some (actually, almost all) $A_i$.
\end{proof}

Note that, for all $a,b\in M$, we have $d(a,b)\leq \frac{1}{2}\Rightarrow d(a,b)=0$.  Thus, since $M$ is $\omega$-saturated, there is an increasing, continuous $\alpha\colon[0,1]\to [0,1]$ such that $\alpha(0)=0$ and $d(a,b)\leq \alpha(d(a,b)\dotminus \frac{1}{2})$ for all $a,b\in M$.  We fix this $\alpha$ for the rest of the section.
\begin{lemma}
If $M$ is strongly pseudocompact, then $M$ is strongly pseudofinite.
\end{lemma}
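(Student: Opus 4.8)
The plan is to use the fixed function $\alpha$ to manufacture, from an arbitrary sentence $\sigma$ that vanishes on $M$, a companion sentence whose vanishing on a structure forces its metric to be uniformly separated, and then to observe that a \emph{compact} structure with a uniformly separated metric must be \emph{finite}. First I would record the sentence
\[ \tau := \sup_{x,y}\bigl(d(x,y)\dotminus\alpha(d(x,y)\dotminus\tfrac12)\bigr), \]
which is a genuine $L$-sentence, since $\alpha$ is a (continuous) unary connective. By the defining property of $\alpha$ fixed just above the statement, we have $\tau^M=0$.

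Now suppose $\sigma^M=0$; the goal is to produce a finite $L$-structure $A$ with $\sigma^A=0$. Since $\max(\sigma,\tau)^M=\max(\sigma^M,\tau^M)=0$, strong pseudocompactness supplies a compact $L$-structure $A$ with $\max(\sigma,\tau)^A=0$, so that simultaneously $\sigma^A=0$ and $\tau^A=0$. The condition $\tau^A=0$ unwinds to $d^A(a,b)\leq\alpha(d^A(a,b)\dotminus\tfrac12)$ for all $a,b\in A$; hence if $d^A(a,b)\leq\tfrac12$ then $d^A(a,b)\leq\alpha(0)=0$, forcing $a=b$. Thus any two distinct points of $A$ lie at distance $>\tfrac12$.

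The crux is then the final observation: a compact metric space whose distinct points are uniformly more than $\tfrac12$ apart is finite, because the open balls of radius $\tfrac14$ about its points are pairwise disjoint and cover the space, so a finite subcover can only leave finitely many points. Therefore $A$ is a finite $L$-structure---recall that finiteness of a structure requires only a finite universe, not a discrete metric, so the possibly non-discrete metric on $A$ is no obstruction---with $\sigma^A=0$, which is exactly what strong pseudofiniteness demands. I expect every step except this last collapse of compactness to finiteness to be routine; that step is where the ``almost discreteness'' encoded by $\alpha$ does its real work, converting the merely compact witness provided by strong pseudocompactness into the finite witness required by strong pseudofiniteness.
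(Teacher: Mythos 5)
Your proof is correct and takes essentially the same route as the paper's: the paper likewise applies strong pseudocompactness to the sentence $\max\bigl(\sigma,\sup_{x,y}(d(x,y)\dotminus\alpha(d(x,y)\dotminus\tfrac12))\bigr)$ and observes that the second ``conjunct'' forces the compact witness $A$ to be discrete, hence finite. Your write-up merely makes explicit the two steps the paper leaves as remarks, namely that $\tau^A=0$ forces distinct points to be more than $\tfrac12$ apart and that a compact, uniformly separated metric space is finite.
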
  

\begin{proof}
Suppose $\sigma^M=0$.  Then \[ M\models \max\Bigl(\sigma,\sup_{x,y}(d(x,y)\dotminus \alpha(d(x,y)\dotminus\tfrac{1}{2}))\Bigr)=0. \] Thus the displayed sentence has value $0$ in some compact $L$-structure $A$.  However, the second ``conjunct'' forces $A$ to be discrete, whence finite.
\end{proof}

In order to finish the proof of Theorem \ref{classical}, it remains to prove that if $M$ is pseudocompact, then it is classically pseudofinite.

Towards this end, given a classical $L$-formula $\varphi(x)$, we define its \emph{continuous transform} $\widetilde{\varphi}(x)$ by recursion as follows: \begin{itemize}
	\item if $\varphi(x)$ is $t_1(x)=t_2(x)$, then $\widetilde{\varphi}(x)$ is $d(t_1(x),t_2(x))$;
	\item if $\varphi(x)$ is $P(t_1(x),\ldots,t_n(x))$, then $\widetilde{\varphi}(x)$ is $\varphi(x)$;
	\item $[\neg\varphi]\sptilde$ is $1\dotminus\widetilde{\varphi}$;
	\item $[\varphi\wedge\psi]\sptilde$ is $\max(\widetilde{\varphi},\widetilde{\psi})$;
	\item $[\exists y\,\psi(x,y)]\sptilde$ is $\inf_y\widetilde{\psi}(x,y)$.
\end{itemize} It is easy to see that for any classical $L$-structure $B$, any classical $L$-formula $\varphi(x)$, and any $b$ from $B$, we have $B\models \varphi(b) \Rightarrow \widetilde{\varphi}(b)^B=0$.

Now suppose that $M$ is pseudocompact and that $\sigma$ is a classical $L$-sentence such that $M\models \sigma$.  We will find a finite classical $L$-structure $B$ such that $B\models \sigma$.

For every predicate symbol $P$ in $\sigma$ and every suitable tuple $a$ from $M$, we have $P^M(a)\leq \frac{1}{2}\Rightarrow P^M(a)=0.$  Thus, there is an increasing, continuous $\alpha_P\colon[0,1]\to[0,1]$ with $\alpha_P(0)=0$ such that $\sigma_P^M=0$, where \[ \sigma_P:=\sup_{x}(P(x)\dotminus \alpha_P(P(x)\dotminus \tfrac{1}{2})). \]  Similarly, letting $\tau:=\sup_{x,y}(d(x,y)\dotminus \alpha(d(x,y)\dotminus \frac{1}{2}))$, we also have that $\tau^M=0$.  Let $\sigma':=\max(\widetilde{\sigma},(\sigma_P)_P,\tau)$, where the $P$'s range over the predicate symbols appearing in $\sigma$.

Since $M$ is pseudocompact, there is a compact structure $A$ such that $(\sigma')^A\leq \frac{1}{4}$.  Define a binary relation $E$ on $A$ by $E(x,y):\Leftrightarrow d(x,y)\leq \frac{1}{4}$.  Clearly $E$ is reflexive and symmetric.  However, $E$ is also transitive: indeed, if $d(x,y),d(y,z)\leq \frac{1}{4}$, then $d(x,z)\leq \frac{1}{2}$; since $\tau^A\leq \frac{1}{4}$, we get $d(x,z)\leq \frac{1}{4}$.  We now define the desired finite classical $L$-structure as follows.  We define the underlying set of $B$ to be the set of $E$-equivalence classes of $A$.  Since $A$ has a finite $\frac{1}{4}$-net, we have that $B$ is finite.  Now we need to interpret the structure.  For our purposes, it is only relevant how we define the symbols appearing in $\sigma$. Given a predicate symbol $P$ appearing in $\sigma$, we declare $[a]\in P^B$ if and only if $P^A(a)\leq \frac{1}{2}$.  To see that we can do so, we need to check that if $d(a_i,a_i')\leq \frac{1}{4}$ for each $i$, then $P^A(a)\leq \frac{1}{2}\Leftrightarrow P^A(a')\leq \frac{1}{2}$.  However, since $A\models \sigma_P\leq \frac{1}{4}$, then $P^A(a)\leq \frac{1}{2}$ implies $P^A(a)\leq \frac{1}{4}$.  Since we are using the identity as modulus of uniform continuity for all of our symbols, we get that $|P^A(a)-P^A(a')|\leq \frac{1}{4}$, giving the desired result.  Similarly, for every function symbol $f$ appearing in $\sigma$, we define $f^B([a]):=[f^A(a)]$.  Again, we can do so because $[a]=[a']$ implies $|f^A(a)-f^A(a')|\leq \frac{1}{4}$ as we are using the identity modulus of uniform continuity.

Let $L'$ be the reduct of $L$ that only contains the symbols of $\sigma$.  It is easy to prove, by induction on the complexity of formulae, the following:

\begin{lemma}
Suppose that $\varphi(x)$ is a classical $L'$-formula.  Then for any suitable tuple $a$ in $A$, we have: \begin{enumerate}
	\item If $\widetilde{\varphi}^A(a)\leq \frac{1}{4}$, then $B\models \varphi([a])$;
	\item If $\widetilde{\varphi}^A(a)\geq \frac{3}{4}$, then $B\models \neg \varphi([a])$.
\end{enumerate} 
\end{lemma}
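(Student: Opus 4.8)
The plan is to prove both statements simultaneously by induction on the complexity of the classical $L'$-formula $\varphi(x)$, exploiting the recursive definition of the continuous transform $\widetilde{\varphi}$ together with the structure we imposed on $B$. The two clauses must be proved together, since the negation case will swap them: a lower bound on $\widetilde{\varphi}$ becomes an upper bound on $[\neg\varphi]\sptilde = 1\dotminus\widetilde{\varphi}$ and vice versa.

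For the base cases, first consider an atomic formula $P(t_1(x),\ldots,t_n(x))$. Here $\widetilde{\varphi}$ is literally $P$ applied to the terms, so I would first argue that term interpretations behave well under the quotient map, namely that $t^B([a]) = [t^A(a)]$, which follows by an auxiliary induction on terms using $f^B([a]):=[f^A(a)]$. Then $\widetilde{\varphi}^A(a)\leq\frac14$ means $P^A(t^A(a))\leq\frac14\leq\frac12$, which by our definition of $P^B$ gives $[t^A(a)]\in P^B$, i.e.\ $B\models\varphi([a])$, proving (1). For (2), if $\widetilde{\varphi}^A(a)\geq\frac34$, then $P^A(t^A(a))\geq\frac34>\frac12$, so $[t^A(a)]\notin P^B$, giving $B\models\neg\varphi([a])$. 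For the equality atom $t_1=t_2$, whose transform is $d(t_1,t_2)$, the value $\leq\frac14$ says $d(t_1^A(a),t_2^A(a))\leq\frac14$, i.e.\ the terms are $E$-equivalent, so $t_1^B([a])=t_2^B([a])$ and $B\models t_1([a])=t_2([a])$; and a value $\geq\frac34$ forces the $E$-classes to differ, using the $\tau$ dichotomy to rule out the intermediate regime, giving inequality in $B$.

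For the connective cases: negation is immediate from the definition $[\neg\varphi]\sptilde=1\dotminus\widetilde\varphi$, which converts the hypothesis $\widetilde{\varphi}^A\geq\frac34$ of clause (2) into $[\neg\varphi]\sptilde{}^A\leq\frac14$ and conversely, so applying the inductive clauses (2) and (1) for $\varphi$ yields clauses (1) and (2) for $\neg\varphi$. For conjunction $\varphi\wedge\psi$, whose transform is $\max(\widetilde\varphi,\widetilde\psi)$: if the max is $\leq\frac14$ then both are, so both conjuncts hold in $B$ by induction; if the max is $\geq\frac34$ then at least one disjunct is $\geq\frac34$, so at least one conjunct fails, giving $B\models\neg(\varphi\wedge\psi)$. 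The existential case $\exists y\,\psi(x,y)$ has transform $\inf_y\widetilde\psi$; for (1), if $\inf_y\widetilde{\psi}^A(a,y)\leq\frac14$ I would use compactness of $A$ (so the infimum is attained, or at least approached within a tolerance that keeps us below $\frac12$) to find a witness $c$ with $\widetilde{\psi}^A(a,c)\leq\frac14$, whence $B\models\psi([a],[c])$ by induction and so $B\models\exists y\,\psi([a],y)$. For (2), if $\inf_y\widetilde{\psi}^A(a,y)\geq\frac34$ then every $c$ has $\widetilde{\psi}^A(a,c)\geq\frac34$, so $B\models\neg\psi([a],[c])$ for all $c$; since every element of $B$ is some $[c]$, we conclude $B\models\neg\exists y\,\psi([a],y)$.

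The main obstacle I anticipate is the existential clause (1), specifically the gap between the infimum over $A$ being small and securing an honest witness that, after passing to the quotient $B$ and back through the inductive hypothesis, still lands in the ``$\leq\frac14$'' regime rather than drifting into the ambiguous band $(\frac14,\frac34)$. Compactness of $A$ is exactly what rescues this, since it lets me realize the infimum (up to an arbitrarily small error) by an actual element, and the dichotomies built into $\sigma'$ (the $\sigma_P$'s and $\tau$ having small value, forcing the predicate and distance values away from the middle) guarantee that the quotient construction does not smear values across the decision threshold. The secondary subtlety is keeping the two thresholds $\frac14$ and $\frac34$ consistent across the recursion; the $\frac12$-dichotomy amplified through $\alpha_P$ and the identity modulus of continuity is what maintains a clean $\frac14$ buffer on either side, and I would check at each step that no accumulation of error pushes us past these bounds.
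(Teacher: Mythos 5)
Your proof is correct and follows exactly the route the paper intends: a simultaneous induction on both clauses (needed so negation can swap them), with the atomic cases resting on the definitions of $P^B$, $f^B$ and the dichotomies built into $\sigma'$, and the existential case using that inf quantifiers are realized in compact structures — which is precisely the one hint the paper gives for its (otherwise omitted) proof. Note only that your hedged fallback in the existential case (``approached within a tolerance'') would not suffice, since a witness with value $\frac14+\epsilon$ misses the inductive threshold; it is the genuine attainment of the infimum on the compact structure $A$ that makes the argument close.
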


In proving the above lemma, one needs to use that inf quantifiers are realized in compact structures.  By the lemma, since $\widetilde{\sigma}^A\leq \frac{1}{4}$, we have $B\models \sigma$.
This discussion proves:

\begin{lemma}
If $M$ is pseudocompact, then $M$ is classically pseudofinite.
\end{lemma}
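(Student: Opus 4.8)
The plan is to verify the defining property of classical pseudofiniteness directly: given an arbitrary classical $L$-sentence $\sigma$ with $M\models\sigma$, I must exhibit a finite classical $L$-structure $B$ with $B\models\sigma$. All of the apparatus needed has in fact been assembled in the preceding discussion, so the proof amounts to stringing it together and invoking the last lemma.

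First I would pass from the classical sentence $\sigma$ to its continuous transform $\widetilde{\sigma}$, together with the auxiliary sentences $\sigma_P$ (one for each predicate symbol $P$ occurring in $\sigma$) and $\tau$, all of which have value $0$ in $M$; the role of the $\sigma_P$ and of $\tau$ is to force near-discreteness of the relevant predicates and of the metric. Forming $\sigma':=\max(\widetilde{\sigma},(\sigma_P)_P,\tau)$, we have $(\sigma')^M=0$, so pseudocompactness of $M$ furnishes a compact $L$-structure $A$ with $(\sigma')^A\leq\frac14$. On $A$ I would then take the relation $E(x,y):\Leftrightarrow d(x,y)\leq\frac14$, which is an equivalence relation (transitivity following from $\tau^A\leq\frac14$), and let $B$ be the quotient $A/E$. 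Compactness of $A$ guarantees a finite $\frac14$-net, so $B$ is finite; the symbols occurring in $\sigma$ are interpreted on $B$ exactly as in the discussion, with the identity modulus of continuity and the bound $\sigma_P^A\leq\frac14$ ensuring that these interpretations are well defined on $E$-classes.

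Finally I would apply the last lemma, which relates $\widetilde{\varphi}^A$ to satisfaction in $B$: since $\widetilde{\sigma}^A\leq\frac14$, clause (1) of that lemma yields $B\models\sigma$, and this $B$ is the desired finite classical model. The genuine content, and hence the step I expect to be the main obstacle, is not the bookkeeping above but the proof of that last lemma, which goes by induction on formula complexity. The delicate cases are the quantifiers: to push the estimate through an existential quantifier in clause (1) one must know that the $\inf$ appearing in $\widetilde{\varphi}$ is actually attained in the compact structure $A$, so that a witness in $A$ descends to a witness in $B$; this is precisely the place where genuine compactness of $A$, rather than mere finiteness of the $\frac14$-net, is indispensable.
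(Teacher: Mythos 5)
Your proposal is correct and follows essentially the same route as the paper: the paper's proof of this lemma is precisely the preceding discussion (the transform $\widetilde{\sigma}$, the auxiliary sentences $\sigma_P$ and $\tau$, the compact structure $A$ with $(\sigma')^A\leq\frac14$, the quotient $B=A/E$, and the final lemma applied with $\widetilde{\sigma}^A\leq\frac14$), which you have reassembled faithfully. You also correctly identify the one point of genuine content, namely that the induction for the quantifier case needs $\inf$'s to be attained in the compact structure $A$.
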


This finishes the proof of Theorem \ref{classical}.  We should remark that the above discussion is unusual in the sense that when one generalizes a notion to continuous logic, it is often immediate that the notion agrees with the classical notion on classical structures.  For example, if $T$ is a classical theory, then it is immediate to see that $T$ is stable as a classical theory if and only if $T$ is stable as a continuous theory.  It is interesting to note that pseudofiniteness appears to be the first notion where some work is required to show that the notions agree on classical structures.

\section{Examples and questions}\label{sec-ex}

\begin{ex}
Let $L$ be the signature naturally used for closed unit balls of inner product spaces (see \cite[Ex.~2.1]{BBHU}). For $n\geq 1$, let $B_n$ denote the closed unit ball of $\mathbb{R}^n$, viewed naturally as an $L$-structure.  Let $\mathcal{U}$ be any nonprincipal ultrafilter on $\mathbb{N}$ and let $H=\prod_\mathcal{U} B_n$.  Clearly $H$ is the closed unit ball of an infinite-dimensional Hilbert space.  By completeness, we see that \emph{any closed unit ball of a Hilbert space is pseudocompact.}

We will be able to show that $H$ is not strongly pseudofinite; the function $H\to H$, $x\mapsto\tfrac{1}{2}x$, is injective but not surjective.

There are some interesting pseudocompact expansions of $H$.  First, consider $(H,P)=\prod_\mathcal{U}(B_{n},P_{n})$, where $P_{n}\colon\mathbb{R}^{n}\to \mathbb{R}^{n}$ is a projection operator onto an $\lfloor n/2\rfloor$-dimensional subspace of $\mathbb{R}^{n}$.  Then $(H,P)$ is a pseudocompact structure and is a model of the theory of \emph{beautiful pairs} of Hilbert spaces, namely an infinite-dimensional Hilbert space equipped with a projection with infinite-dimensional image and infinite-dimensional orthogonal complement.  (See \cite{BV}.)

Next, let $\{z_i \colon i<\omega\}$ be a countable dense subset of $\{z\in \mathbb{C} \colon |z|=1\}$.  Let $(H,U)=\prod_\mathcal{U} (B_n,U_n)$, where $U_n\colon\mathbb{C}^n\to \mathbb{C}^n$ is a unitary operator with eigenvalues $\{z_0,\ldots,z_n\}$.  Then for any $m\geq n$, we have \[ (B_m,U_m)\models \inf_x \max\bigl(|\langle x,x\rangle -1|,\|U(x)-z_n x\|\bigr)=0. \]  It follows from the work in \cite{BUZ} that $(H,U)$ is a model of the theory of Hilbert spaces equipped with a generic automorphism.
\end{ex}

\begin{ex}\label{apaa}
Let $L$ be the signature for probability structures (see \cite[Sec.~16]{BBHU}).  Let $\mathcal{B}_n$ be the probability structure with event algebra $2^n$ and with the counting measure $\mu_n$.  Let $\mathcal{U}$ be a nonprincipal ultrafilter on $\mathbb{N}$ and let $\mathcal{B}=\prod_\mathcal{U} \mathcal{B}_n$.  We claim that $\mathcal{B}$ is an atomless probability structure; since the theory of atomless probability structures is complete, it follows that \emph{any atomless probability structure is pseudofinite}.  Towards this end, suppose that $x=[(x_n)]\in \mathcal{B}$ is such that $\mu(x)>0$.  Set $m_n:=|x_n|$; then $m_n>1$ for almost all $n$, for otherwise $\mu_n(x_n)\leq \frac{1}{n}$ and hence $\mu(x)=\lim_{\mathcal{U}}\mu_n(x_n)=0$.  For such $n$, let $y_n\subseteq x_n$ be such that $|y_n|=\frac{1}{2}m_n$ if $m_n$ is even or $|y_n|=\frac{1}{2}(m_n-1)$ if $m_n$ is odd.  Then $\mathcal{B}_n\models \inf_y|\mu(x_n\cap y)-\mu(x_n\cap y^c)|\leq \frac{1}{n}$ for almost all $n$, whence $\mathcal{B}\models \inf_y|\mu(x\cap y)-\mu(x\cap y^c)|=0$.  It follows that $\mathcal{B}$ is atomless.  Observe that, since the (complete) theory of atomless probability structures is finitely axiomatizable, it cannot be strongly pseudofinite.

An extension of this idea shows that any atomless probability structure equipped with a \emph{generic (or aperiodic) automorphism} is also pseudofinite (but of course not strongly pseudofinite).  Indeed, the theory of atomless probability structures equipped with a generic automorphism (denoted $\text{APAA}$ in \cite[Sec.~18]{BBHU}) is axiomatized (in the language of probability structures expanded by a unary function symbol $\tau$) by the axioms of atomless probability structures equipped with an automorphism together with, for each $n\geq1$, the axiom \[ \inf_e\max\bigl(|\tfrac{1}{n}-\mu(e)|,\mu(e\cap \tau(e)),\ldots,\mu(e\cap \tau^{n-1}(e))\bigr)=0. \]  We consider the probability structures $\mathcal{B}_m$ from above and we equip them with the automorphisms $\tau_m$ induced by the point map $x\mapsto x+1\mod m$.  Fix $n\geq 1$ and suppose that $m>n$.  Choose $k\in \{1,\ldots,m\}$ maximal with respect to $\frac{k-1}{m-1}\leq \frac{1}{n}$.  Let $e=\{1,1+n,\ldots,1+(k-1)n\}\in \mathcal{B}_m$.  Observe that $|e|=k$ so \[ |\mu(e)-\tfrac{1}{n}|\leq |\tfrac{k}{m}-\tfrac{k}{m-1}|+|\tfrac{k}{m-1}-\tfrac{k-1}{m-1}|+|\tfrac{k-1}{m-1}-\tfrac{1}{n}|\leq \tfrac{3}{m-1}. \] Furthemore, observe that for $i\in \{0,1,\ldots,n-1\}$ we have $\tau_m^i(e)\cap e\subseteq \{1\}$, so $\mu(\tau_m^i(e)\cap e)\leq \frac{1}{m}$ for each such $i$.  Consequently, \[ (\mathcal{B}_m,\tau_m)\models \inf_e\max\bigl(|\tfrac{1}{n}-\mu(e)|,\mu(e\cap \tau(e)),\ldots,\mu(e\cap \tau^{n-1}(e))\bigr)\leq \tfrac{3}{m-1} \] for each $m\geq n$.  Let $\tau_\infty=\lim_{\mathcal{U}}\tau_m$.  It follows that \[ (\mathcal{B},\tau_\infty)\models \inf_e\max\bigl(|\tfrac{1}{n}-\mu(e)|,\mu(e\cap \tau(e)),\ldots,\mu(e\cap \tau^{n-1}(e))\bigr)=0. \]  Since $n\geq 1$ was arbitrary, we have that $(\mathcal{B},\tau_\infty)\models \text{APAA}$.
\end{ex}

\begin{ex}
We can generalize the previous example as follows:  Let $L$ be a countable classical signature.  We claim that if ${M}$ is a pseudofinite ${L}$-structure, then any of its \emph{Keisler randomizations} are pseudofinite metric structures, whence pseudofiniteness is a robust model-theoretic notion in the sense that it is preserved under randomizations.  (See \cite{BK} for information on the notion of Keisler randomization.  The reason this example generalizes the previous example is that the theory of atomless probability algebras is just the theory of the Keisler randomization of a two-element set.)  To see this, suppose that ${M}\equiv \prod_{\mathcal{U}}{M}_n$, where $({M}_n\colon \ n\in \mathbb{N})$ is a family of finite $L$-structures and $\mathcal{U}$ is a nonprincipal ultrafilter on $\mathbb{N}$; observe that since $L$ is countable, we may always find such a countable family of finite structures.  We consider the probability structures $\mathcal{B}_n$ from the previous example and we let $\mathcal{K}_n:=M_n^{\{1,\ldots,n\}}$.  We claim that $(\mathcal{K},\mathcal{B}):=\prod_{\mathcal{U}}(\mathcal{K}_n,\mathcal{B}_n)\models T^R$, where $T:=\operatorname{Th}({M})$; since $T^R$ is complete, it suffices to prove this claim.  The Validity, Boolean, Distance, Fullness, Event, and Measure axioms are clearly ``true'' in each of the factor structures, and hence ``true'' in the ultraproduct.  The previous example already shows that the ultraproduct satisfies the Atomless Axiom.  It remains to verify the Transfer Axiom, namely, for every sentence $\sigma\in T$, we need $(\mathcal{K},\mathcal{B})\models d(\llbracket \sigma \rrbracket,\top)=0$.  Note that $d^{\mathcal{B}_n}(\llbracket \sigma\rrbracket, \top)=0$ if and only if ${M}_n\models \sigma$; since $\sigma\in T$, then ${M}_n\models \sigma$ for almost all $n$, whence $d^{\mathcal{B}}(\llbracket \sigma\rrbracket,\top)=\lim_{\mathcal{U}}d^{\mathcal{B}_n}(\llbracket \sigma \rrbracket,\top)=0$.  Note that this example provides us with our first examples of unstable pseudofinite theories (other than the classical ones).  Indeed, if $T$ is not stable, then $T^R$ is unstable (see~\cite{BK}).  Also, by a result of Ben Yaacov (see \cite{B2}), if $T$ is simple unstable, then $T^R$ is \emph{not} simple.  Thus, if $T$ is the theory of the random graph or the theory of pseudofinite fields, then $T^R$ is pseudofinite but not simple. 
\end{ex}

\begin{ex}
Let $(X,d)$ be a \emph{proper} metric space (that is, its closed balls are compact).  Fix a basepoint $p\in X$ and consider $(X,d,p)$ as a many-sorted structure in the natural way.  (See~\cite{Carlisle} for all the concepts.) Then the asymptotic cone of $(X,d,p)$ with respect to any nonprincipal ultrafilter on $\mathbb{N}$ will be pseudocompact, and hence pseudofinite by Theorem \ref{compfin} below.  Of particular interest is the case when $(X,d)$ is the Cayley graph of a finitely generated group $G$.  If $G$ is furthermore assumed to be hyperbolic, then this asymptotic cone will be an $\mathbb{R}$-tree.  If the hyperbolic group is \emph{nonelementary} (that is, it does not contain an infinite cyclic group of finite index), for example $G=\mathbb{F}_2$ the free group on two generators, then the $\mathbb{R}$-tree is \emph{richly branching}.  Since the theory of richly branching $\mathbb{R}$-trees is complete, we have that any richly branching $\mathbb{R}$-tree is pseudofinite.
\end{ex}

We end this section with a couple of questions.

\begin{question}
Let $\mathfrak{U}$ denote the \emph{bounded Urysohn space}, that is, the unique Polish metric space of diameter bounded by $1$ which is ultrahomogeneous and contains an isometric copy of every Polish metric space of diameter bounded by $1$.  Let $T_\mathfrak{U}$ denote the theory of $\mathfrak{U}$ in the metric signature containing only the distance symbol.  Is $\mathfrak{U}$ pseudofinite or pseudocompact?
\end{question}

The model theory of $\mathfrak{U}$ is quite well-understood; see the papers \cite{Usvy} and \cite{GE}, where it is shown that $T_\mathfrak{U}$ is complete, admits quantifier elimination, is $\aleph_0$-cat\-e\-gor\-i\-cal, and is rosy with respect to finitary imaginaries (but is not simple).  In some sense, the Urysohn space is the continuous analog of the random graph and so one might expect that the answer to the previous question is positive.  In \cite{Usvy}, an axiomatization for $T_\mathfrak{U}$ is given by writing down conditions in continuous logic describing a certain collection of ``extension axioms.''  Thus, the following lemma might prove useful.

\begin{lemma}
Suppose that $\{\gamma=0 \colon \gamma \in \Gamma\} \models \operatorname{Th}(M)$ for some collection $\Gamma$ of $L$-sentences, and that, for every $\gamma_1,\ldots,\gamma_n\in \Gamma$ and every $\epsilon>0$, there is a finite (resp.\ compact) $L$-structure $A$ such that $A\models \max(\gamma_1,\ldots,\gamma_n)\leq \epsilon$.  Then $M$ is pseudofinite (resp.\ pseudocompact).
\end{lemma}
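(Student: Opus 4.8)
The plan is to exhibit $M$ as elementarily equivalent to an ultraproduct of finite (resp.\ compact) structures and then invoke the implication (3)$\Rightarrow$(1) of Lemma~\ref{core-equiv}. The construction mirrors the (2)$\Rightarrow$(3) step of that lemma, with the family $\Gamma$ playing the role previously played by the theory of $M$.

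Concretely, I would let $I$ be the set of pairs $(\Delta,k)$ with $\Delta\subseteq\Gamma$ a nonempty finite set and $k\in\mathbb{N}^{>0}$. For each $(\Delta,k)\in I$, the hypothesis (applied with $\epsilon=\frac1k$) furnishes a finite (resp.\ compact) $L$-structure $A_{\Delta,k}$ with $A_{\Delta,k}\models\max\Delta\leq\frac1k$. For $\gamma\in\Gamma$ and $k\in\mathbb{N}^{>0}$, set
\[ Y_{\gamma,k}=\{(\Delta,l)\in I\colon \gamma^{A_{\Delta,l}}\leq\tfrac1k\}. \]
The first step is to check that $\{Y_{\gamma,k}\}$ has the finite intersection property: given $\gamma_1,\dots,\gamma_n$ and $k_1,\dots,k_n$, put $\Delta=\{\gamma_1,\dots,\gamma_n\}$ and $k=\max(k_1,\dots,k_n)$; then $A_{\Delta,k}\models\max\Delta\leq\frac1k\leq\frac1{k_i}$ for each $i$, so $(\Delta,k)\in\bigcap_i Y_{\gamma_i,k_i}$.

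Next I would fix an ultrafilter $\mathcal{U}$ on $I$ extending $\{Y_{\gamma,k}\}$ and set $N:=\prod_{\mathcal{U}}A_{\Delta,k}$. For each fixed $\gamma\in\Gamma$ and each $k$, the membership $Y_{\gamma,k}\in\mathcal{U}$ gives $\gamma^N=\lim_{\mathcal{U}}\gamma^{A_{\Delta,l}}\leq\frac1k$; letting $k\to\infty$ yields $\gamma^N=0$. Thus $N$ is a model of $\{\gamma=0\colon\gamma\in\Gamma\}$, so by hypothesis $N\models\operatorname{Th}(M)$, i.e.\ $M\equiv N$. Since $N$ is an ultraproduct of finite (resp.\ compact) structures, $M$ satisfies clause (3) of Lemma~\ref{core-equiv}, and is therefore pseudofinite (resp.\ pseudocompact).

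I do not expect a serious obstacle here; the entire argument is the finite-intersection bookkeeping together with the observation that the ultralimits of the values $\gamma^{A_{\Delta,l}}$ vanish. The one point deserving care is that the hypothesis $\{\gamma=0\colon\gamma\in\Gamma\}\models\operatorname{Th}(M)$ is a semantic entailment, so I must produce an \emph{honest} structure $N$ satisfying every condition $\gamma=0$ exactly (not merely approximately) before concluding $M\equiv N$. The two-parameter index set, in which the approximation $\frac1k$ is allowed to tighten independently of how large a finite subset $\Delta$ of $\Gamma$ one demands, is precisely the device that forces $\gamma^N=0$ simultaneously for every $\gamma\in\Gamma$.
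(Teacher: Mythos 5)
Your proof is correct, but it takes a genuinely different route from the paper's. The paper works through clause (2) of Lemma~\ref{core-equiv} together with the compactness theorem of continuous logic: given $\sigma$ with $\sigma^M=0$ and $\delta>0$, the hypothesis makes $\{\gamma=0\colon\gamma\in\Gamma\}\cup\{\sigma\geq\delta\}$ unsatisfiable, so by compactness some finite approximate fragment $\{\max(\gamma_1,\ldots,\gamma_n)\leq\epsilon,\ \sigma\geq\delta\}$ is already unsatisfiable; any finite (resp.\ compact) $A$ with $A\models\max(\gamma_1,\ldots,\gamma_n)\leq\epsilon$ then has $\sigma^A<\delta$, which is exactly clause (2). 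You instead build an honest ultraproduct $N=\prod_{\mathcal{U}}A_{\Delta,k}$ in which every $\gamma\in\Gamma$ vanishes exactly, deduce $N\models\operatorname{Th}(M)$ from the semantic entailment, and conclude via clause (3); as you note, this replays the (2)$\Rightarrow$(3) step of Lemma~\ref{core-equiv} with $\Gamma$ in place of $\operatorname{Th}(M)$. What your route buys is self-containedness: you never invoke the compactness theorem, only \L o\'s's theorem, and you make explicit the point that the entailment hypothesis can only be applied to an actual model of all the conditions $\gamma=0$, which is precisely what the two-parameter index set secures. What the paper's route buys is brevity, using compactness as a black box with no ultrafilter bookkeeping. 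One step you pass over silently --- that $N\models\operatorname{Th}(M)$ implies $N\equiv M$ --- is standard but deserves a word: for each sentence $\sigma$, writing $r=\sigma^M$, both $\sigma\dotminus r$ and $r\dotminus\sigma$ lie in $\operatorname{Th}(M)$, so $\sigma^N=r$. (There is also the degenerate case $\Gamma=\emptyset$, where your index set $I$ is empty and no ultrafilter exists; there every structure models $\operatorname{Th}(M)$ and the conclusion is trivial, so this costs nothing.)
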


\begin{proof}
Let $\sigma$ be an $L$-sentence such that $\sigma^M=0$.  Fix $\delta>0$.  Then by compactness, there is $\epsilon>0$ and $\gamma_1,\ldots,\gamma_n\in \Gamma$ such that \[ \{\max(\gamma_1,\ldots,\gamma_n)\leq \epsilon,\sigma\geq \delta\} \] is unsatisfiable.  Let $A$ be a finite (resp.\ compact) $L$-structure such that $A\models \max(\gamma_1\ldots,\gamma_n)\leq \epsilon$.  Then $\sigma^A<\delta$.  Since $\delta>0$ is arbitrary, we see that $M$ is pseudofinite (resp.\ pseudocompact).
\end{proof}

Thus, by the previous lemma, in order to show that $\mathfrak{U}$ is pseudofinite (resp.\ pseudocompact), one might just try to prove that any finite collection of extension axioms are approximately true in some finite (resp.\ compact) bounded metric space.  By Theorem~\ref{compfin} below, if one can prove that $\mathfrak{U}$ is pseudocompact, then it will follow that $\mathfrak{U}$ is pseudofinite.

\begin{question}
Is there an example of an ``essentially continuous'' strongly pseudofinite (resp.\ strongly pseudocompact) structure that is not finite (resp.\ compact)?
\end{question}

While the term ``essentially continuous'' is admittedly vague (although there have been attempts by others to make this notion precise), we use it to preclude discrete examples.  For example, any pseudofinite classical structure is strongly pseudofinite; we do not want such structures to constitute a positive answer to our question.  In the last section, we use some results concerning definable endofuctions in strongly pseudofinite structures to show that some pseudofinite structures are not strongly pseudofinite.  In fact, since continuous logic is an approximate logic, we conjecture that the answer to the above question might be negative.  It would even be interesting to settle this question under some extra set-theoretic hypotheses.

We should mention that there is a na\"ive attempt to construct a strongly pseudofinite structure, namely by using an ultraproduct relative to an \emph{$\omega_1$-complete ultrafilter}.  Recall that if $I$ is an index set and $\mathcal{U}$ is a nonprincipal ultrafilter on $I$, then $\mathcal{U}$ is $\omega_1$-complete if, whenever $E\subseteq \mathcal{U}$ is countable, then $\bigcap E\in \mathcal{U}$. If $I=\mathbb{N}$, then this is equivalent to the following:  whenever $I_1\supsetneq I_2\supsetneq I_3 \supsetneq \cdots$ is a decreasing sequence of subsets of $I$ such that $I_n\in \mathcal{U}$ for each $n$, then $\bigcap_{n\geq 1}I_n\not=\emptyset$ (see \cite[Prop.~4.3.3 and~4.3.4]{CK}). Suppose now that $(A_i\colon i\in I)$ is a family of finite (resp.\ compact) $L$-structures, $\mathcal{U}$ is an $\omega_1$-complete ultrafilter on $I$, and that $M$ is an $L$-structure satisfying $M\equiv \prod_{\mathcal{U}} A_i$.  Then $M$ is strongly pseudofinite (resp.\ strongly pseudocompact).  Indeed, suppose that $\sigma$ is a sentence such that $\sigma^M=0$.  Let $I_n:=\{i\in I\colon A_i\models \sigma^{A_i}\leq \frac{1}{n}\}\in \mathcal{U}$.  Then by assumption, there is $i\in \bigcap_{n\geq 1}I_n$, and $\sigma^{A_i}=0$ for this $i$. Unfortunately, $M$ will be finite (resp.\ compact).  Indeed, let \[ D_n=\{i\in I\colon |A_i|\geq n\} .\]  If each $D_n\in \mathcal{U}$, then the intersection of the $D_n$ is also in $\mathcal{U}$, contradicting that each $A_i$ is finite.  Consequently, $I\setminus D_n\in \mathcal{U}$ for some $n$, whence $M$ has size $<n$.  For the pseudocompact situation, given $\epsilon>0$, let \[ D_n:=\{i\in I \colon A_i\ \text{does not have an $\epsilon$-net of size}\ \leq n\}. \]  The above argument shows that $M$ has a finite $\epsilon$-net; since $\epsilon>0$ was arbitrary, $M$ is compact.

\section{Relationship between pseudofiniteness and pseudocompactness}\label{sec-relation}

In this section, $L$ denotes a $1$-bounded, one-sorted metric signature. We will prove:

\begin{thm}\label{compfin} Suppose that $L$ contains only predicate and constant symbols. Then the two notions \emph{pseudofinite} and \emph{pseudocompact} coincide.
\end{thm}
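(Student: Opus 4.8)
The plan is to prove the only nontrivial direction, namely that pseudocompactness implies pseudofiniteness, by showing that every \emph{compact} $L$-structure is itself pseudofinite and then chaining approximations. Concretely, using the characterization in Lemma~\ref{core-equiv}(2), it suffices to establish the following claim: for every compact $L$-structure $A$, every $L$-sentence $\sigma$, and every $\epsilon>0$, there is a \emph{finite} $L$-structure $B$ with $|\sigma^A-\sigma^B|\le\epsilon$. Granting the claim, if $M$ is pseudocompact and $\sigma^M=0$, then Lemma~\ref{core-equiv} produces a compact $A$ with $\sigma^A\le\epsilon/2$, and the claim then yields a finite $B$ with $\sigma^B\le\epsilon$; since $\epsilon>0$ is arbitrary, Lemma~\ref{core-equiv} gives that $M$ is pseudofinite. (Equivalently, the claim says every compact structure is pseudofinite, after which one could instead invoke Lemma~\ref{core-equiv}(3) together with the preservation of pseudofiniteness under ultraproducts and elementary equivalence.)

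To prove the claim, fix the finitely many predicate symbols $P_1,\dots,P_k$ and constant symbols $c_1,\dots,c_l$ occurring in $\sigma$, and fix a parameter $\epsilon'>0$ to be chosen at the end. Since $A$ is compact, it is totally bounded, so it has a finite $\epsilon'$-net $B_0$; set $B:=B_0\cup\{c_1^A,\dots,c_l^A\}$, still a finite $\epsilon'$-net. Make $B$ into an $L$-structure by restricting each predicate, $P^B:=P^A\!\restriction\!B$, by interpreting the relevant constants as themselves, $c_j^B:=c_j^A$, and by interpreting every remaining symbol arbitrarily (say as some fixed element of $B$). Here is exactly where the hypothesis on $L$ is used: because $L$ has no function symbols of positive arity, a finite net automatically supports an $L$-structure; there is nothing to close off, whereas a function symbol would in general force $B$ to be infinite, or force a ``rounding'' operation that destroys the computation.

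The remaining work is an induction on the complexity of formulae showing that $|\varphi^A(\bar a)-\varphi^B(\bar a)|$ is small for all tuples $\bar a$ from $B$. For quantifier-free $\varphi$ the two values agree \emph{exactly} on such tuples, since the atomic predicates and the relevant constants were copied verbatim and the connectives are evaluated at identical arguments. The only genuine discrepancy arises at a quantifier $\inf_x\psi(x,\bar a)$ (and dually $\sup_x$): passing from $\inf_{x\in A}\psi^A(x,\bar a)$ to $\inf_{x\in B}\psi^A(x,\bar a)$ changes the value by at most $\Delta_\psi(\epsilon')$, where $\Delta_\psi$ is the modulus of uniform continuity of $\psi$ in the variable $x$, because every point of $A$ lies within $\epsilon'$ of a point of $B$. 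Combining this net estimate with the inductive bound for $\psi$ (and propagating through connectives via their moduli) shows that the total error is bounded by a fixed expression in $\epsilon'$ built from the finitely many moduli attached to the subformulae of $\sigma$. Choosing $\epsilon'$ small enough makes this bound at most $\epsilon$; applying the result to $\sigma$ proves the claim.

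The hard part is the quantifier step, and more precisely ensuring that the net approximation is \emph{uniform}: the estimate $\Delta_\psi(\epsilon')$ must not depend on the structure $A$ or the parameters $\bar a$. This is guaranteed by the fact (from \cite{BBHU}) that every formula carries a modulus of uniform continuity in each of its free variables which is determined by the formula and the moduli of the nonlogical symbols alone. Granting this uniformity, the only remaining care is bookkeeping the finitely many error contributions so that a single threshold for $\epsilon'$ works; since $\sigma$ has finitely many subformulae this is routine.
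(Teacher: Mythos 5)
Your proposal is correct and is essentially the paper's own argument: the paper likewise turns finite $\frac{1}{m}$-nets of a compact structure into finite $L$-structures (predicates by restriction, constants placed at or near net points -- possible precisely because $L$ has no function symbols) and proves, by induction on the complexity of formulae, that values on the net approximate values in the compact structure, with the quantifier step handled exactly as you describe via density of the net and the uniform continuity moduli of formulae. The only difference is packaging: the paper assembles the nets into an ultraproduct $N=\prod_{\mathcal{U}}X_m$ with $M\equiv N$ and invokes Lemma~\ref{core-equiv}(3), whereas you work sentence-by-sentence through Lemma~\ref{core-equiv}(2); the two are interchangeable.
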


We will also obtain a similar result for languages with function symbols.

\emph{Henceforth, $M$ denotes a compact $L$-structure.  For each $m\geq 1$, let $X_m\subseteq M$ be a finite $\frac{1}{m}$-net for $M$.}

Suppose first that $L$ is relational.  We can then view each $X_m$ as a substructure of $M$.  We let $N:=\prod_\mathcal{U} X_m$, where $\mathcal{U}$ is some nonprincipal ultrafilter on $\mathbb{N}$.  We denote sequences from $\prod X_m$ as $(a^m)$ and write $[a^m]$ for the corresponding equivalence class in $N$.

If $c$ is a constant symbol in $L$, and if $c^M\in X_m$, we define $c^{X_m}=c^M$; if $c^M\notin X_m$, we choose $c^{X_m}$ so that $d(c^M,c^{X_m})<\frac{1}{m}$.  Now $X_m$ continues to be an $L$-structure, though not necessarily a substructure of $M$.

Suppose now that $L$ has function symbols.  If $f$ is a function symbol in $L$ and $a$ is a suitable tuple in $X_m$, define $f^{X_m}(a)$ to be an element of $X_m$ so that $d(f^{X_m}({a}),f^M({a}))<\frac{1}{m}$.  Observe that $X_m$ may not be an $L$-structure for the singular reason that it may not respect the modulus of uniform continuity for $f$ specified by $L$.  However, $X_m$ is an ``almost $L$-structure'' in the following sense:

\begin{df}
An \emph{almost $L$-structure} $X$ is defined as an $L$-structure, except that the clause of modulus of continuity for each function symbol $f$ is weakened thus: for sufficiently small $\epsilon$ and every $a,b$ from $X$, we require \[ d({a},{b})<\Delta_f(\epsilon)\Rightarrow d(f^{X}({a}),f^{X}({b}))\leq \epsilon ;\] and the clause of modulus of continuity for each predicate symbol is weakened analogously. (Every $L$-structure is an almost $L$-structure.)
\end{df}

Indeed, since the net $X_m$ is finite, there is $r_m>0$ such that $d(a,b)\geq r_m$ for each pair of distinct $a,b\in X_m$, in which case, when $\Delta_f(\epsilon)<r_m$, we have that $d({a},{b})<\Delta_f(\epsilon)\Rightarrow {a}={b}$.  (We are assuming here that $\lim_{\epsilon\to 0^+}\Delta_f(\epsilon)=0$, which is usually the case.)

Once again, let $N:=\prod_\mathcal{U} X_m$.  Observe that, although the $X_m$ are only almost $L$-structures, $N$ is an \emph{actual} $L$-structure.  Indeed, fix $\epsilon>0$ and suppose that $[a^m],[b^m]$ are tuples in $N$ such that $d([a^m],[b^m])<\Delta_f(\epsilon)$.  Then for almost all $m$, we have $d(a^m,b^m)<\Delta_f(\epsilon)$, whence $d(f^{X_m}(a^m),f^{X_m}(b^m))\leq \epsilon+\frac{2}{m}$.  Consequently, $d(f^N([a^m]),f^N([b^m]))\leq \epsilon$.

\begin{lemma}
For any $\epsilon>0$ and $L$-term $t(x)$, there is $K\in \mathbb{N}$ so that for all $m\geq K$ and all suitable $a$ from $X_m$, we have $d(t^M(a),t^{X_m}(a))<\epsilon$.  
\end{lemma}  

\begin{proof}
One proves this lemma by induction on the complexity of $t$.  When $t$ is a function symbol applied to variables or a constant symbol, then this follows from the definition of interpretation in $X_m$.  Now suppose that $t(x)=f(t_1(x),\ldots,t_k(x))$.  Choose $K'$ so that $\frac{1}{K'}<\Delta_f(\frac{\epsilon}{2})$, and then choose $K>\frac{2}{\epsilon}$ so that our claim holds for $t_1,\ldots,t_k$ with $\frac{1}{K'}$ in place of $\epsilon$.  Suppose that $m\geq K$ and $a$ lies in $X_m$.  Then $d(t^M_i(a),t^{X_m}_i(a))<\frac{1}{K'}$ for each $i$, whence \[ d\bigl(f^M(t_1^M(a),\ldots,t_k^M(a)),f^M(t_1^{X_m}(a),\ldots,t_k^{X_m}(a))\bigr)\leq \tfrac{\epsilon}{2} ,\] whence \[ d\bigl(f^M(t_1^M(a),\ldots,t_k^M(a)),f^{X_m}(t_1^{X_m}(a),\ldots,t_k^{X_m}(a))\bigr)\leq \tfrac{\epsilon}{2}+\tfrac{1}{m}<\epsilon .\]
\end{proof}

\begin{lemma}
For any $\epsilon>0$ and $L$-formula $\varphi(x)$, there is $K\in\mathbb{N}$ so that for all $m\geq K$ and all suitable $a$ from $X_m$, we have $|\varphi^M(a)-\varphi^{X_m}(a)|<\epsilon$.  
\end{lemma}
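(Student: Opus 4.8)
The plan is to prove this by induction on the complexity of $\varphi$, exactly parallel to the preceding lemma on terms, and indeed leaning on that lemma for the base case. Throughout I would exploit two facts: that $M$ is a genuine $L$-structure, so that every formula $\psi$ has a uniformly continuous interpretation $\psi^M$ whose modulus of continuity is computable from $\psi$; and that $X_m$ is a $\frac1m$-net, so that every element of $M$ lies within $\frac1m$ of a point of $X_m$. There are three cases to consider: atomic formulae, connectives, and quantifiers.

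For the atomic case, say $\varphi(x)=P(t_1(x),\dots,t_k(x))$ for a predicate symbol $P$ (the distance symbol being handled identically). Since predicate symbols are interpreted in $X_m$ by restriction from $M$, we have $\varphi^{X_m}(a)=P^M(t_1^{X_m}(a),\dots,t_k^{X_m}(a))$, so the only discrepancy with $\varphi^M(a)=P^M(t_1^M(a),\dots,t_k^M(a))$ comes from the term arguments. Given $\epsilon$, I would choose $\delta$ from the modulus of uniform continuity of $P^M$, apply the previous lemma to each $t_i$ with $\delta$ in place of $\epsilon$ to get $d(t_i^M(a),t_i^{X_m}(a))<\delta$ for $m$ large, and conclude $|\varphi^M(a)-\varphi^{X_m}(a)|<\epsilon$. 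The connective case is immediate: if $\varphi=u(\varphi_1,\dots,\varphi_n)$ for a continuous connective $u$, then $u$ is uniformly continuous on $[0,1]^n$, so a single application of the inductive hypothesis to each $\varphi_j$ (with a $\delta$ supplied by the modulus of $u$) yields the bound.

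The main obstacle is the quantifier case, say $\varphi(x)=\inf_y\psi(x,y)$ (the $\sup$ case being symmetric). Here two independent errors must be controlled: changing the domain of quantification from $M$ to the finite net $X_m$, and changing the interpretation of the matrix from $\psi^M$ to $\psi^{X_m}$. I would treat them separately via the triangle inequality, comparing $\varphi^M(a)=\inf_{b\in M}\psi^M(a,b)$ and $\varphi^{X_m}(a)=\inf_{b\in X_m}\psi^{X_m}(a,b)$ through the intermediate quantity $\inf_{b\in X_m}\psi^M(a,b)$. For the domain change, $X_m\subseteq M$ gives $\inf_{b\in X_m}\psi^M(a,b)\geq\inf_{b\in M}\psi^M(a,b)$; for the reverse inequality I would use compactness of $M$ to realize the infimum at some $b^*\in M$, pick $b'\in X_m$ with $d(b^*,b')<\frac1m$, and invoke the uniform continuity modulus of $\psi^M$ to bound $\psi^M(a,b')$, so that the two infima differ by at most a quantity tending to $0$ with $m$. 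For the interpretation change, the inductive hypothesis applied to $\psi(x,y)$ gives $|\psi^M(a,b)-\psi^{X_m}(a,b)|<\epsilon'$ uniformly over all suitable $(a,b)$ from $X_m$ once $m$ is large, and infima of uniformly close functions are within $\epsilon'$ of each other; note that this step requires no continuity of $\psi^{X_m}$, which is fortunate since $X_m$ is only an almost $L$-structure.

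Combining the three cases, one chooses $K$ large enough that all the error terms at the relevant $\epsilon'$ and $\delta$ fall below the target, and the lemma follows. The only delicate bookkeeping is ensuring that every estimate is uniform over all suitable tuples $a$ (and, in the quantifier step, over the bound variable as well), which is precisely what the inductive statement is designed to provide.
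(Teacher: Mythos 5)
Your proof is correct and takes essentially the same approach as the paper's: induction on complexity, with the atomic case reduced to the term lemma via the modulus of uniform continuity of $P$, connectives handled by their uniform continuity, and the quantifier case controlled by moving a minimizer to a nearby net point and applying the inductive hypothesis. The only cosmetic difference is that the paper uses approximate minimizers in a direct two-sided estimate (avoiding any appeal to compactness at this step), whereas you pass through the intermediate quantity $\inf_{b\in X_m}\psi^M(a,b)$ and realize the infimum exactly; the underlying estimates are identical.
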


\begin{proof}
We induct on the complexity of $\varphi$.  First suppose that $\varphi(x)$ is the atomic formula $d(t_1(x),t_2(x))$.  Then \begin{multline*} |d(t_1^M(x),t_2^M(x))-d(t_1^{X_m}(x),t_2^{X_m}(x))| \leq \\ d(t_1^M(x),t_1^{X_m}(x))+d(t_2^M(x),t_2^{X_m}(x))<\epsilon \end{multline*} for $m$ sufficiently large by the previous lemma.  Now suppose that $\varphi(x)$ is the atomic formula $P(t_1(x),\ldots,t_k(x))$.  Then \begin{multline*} |P^M(t_1^M(x),\ldots,t_k^M(x))-P^{X_m}(t_1^{X_m}(x),\ldots,t_k^{X_m}(x))|= \\ |P^M(t_1^M(x),\ldots,t_k^M(x))-P^M(t_1^{X_m}(x),\ldots,t_k^{X_m}(x))|<\epsilon \end{multline*} if $m$ is sufficiently large so that $d(t_i^M(x),t_i^{X_m}(x))<\Delta_P(\epsilon)$ for each $i$.  Uniform continuity of connectives takes care of the connective case.

It remains to take care of the quantifier case.  Suppose that $\varphi(x)=\inf_y\psi(x,y)$.  Let $K'$ be as in the conclusion of the lemma for $\psi(x,y)$ and $\frac{\epsilon}{3}$.  Choose $K\geq K'$ so that $\frac{1}{K}\leq\Delta_\psi(\frac{\epsilon}{3})$.  We claim that this $K$ works.    

Suppose that $m\geq K$ and $a$ lies in $X_m$.  Let $r=\varphi^M(a)$ and $s=\varphi^{X_m}(a)$.  Let $b\in M$ be such that $\psi^M(a,b)< r+\frac{\epsilon}{3}$.  Let $c\in X_m$ be such that $d(b,c)\leq \frac{1}{m}$.  Then $\psi^M(a,c)\leq r+\frac{2\epsilon}{3}$.  By definition of $K'$, we obtain $\psi^{X_m}(a,c)< r+\epsilon$.  Consequently, $s< r+\epsilon$.  On the other hand, let $e\in X_m$ be such that $\psi^{X_m}(a,e)< s+\frac{\epsilon}{3}$.  Then by assumption, $\psi^M(a,e)< s+\frac{2\epsilon}{3}$, so $r< s+\epsilon$.  Thus, $|r-s|<\epsilon$.
\end{proof}

\begin{cor}
$M\equiv N$.
\end{cor}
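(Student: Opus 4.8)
The plan is to establish $M \equiv N$ by verifying that $\sigma^M = \sigma^N$ for every $L$-sentence $\sigma$, using the preceding lemma as the essential input and the fundamental theorem of continuous ultraproducts (\L o\'s) to evaluate $\sigma^N$. First I would fix an $L$-sentence $\sigma$ and an arbitrary $\epsilon>0$. Applying the previous lemma to $\sigma$, regarded as a formula in the empty tuple of free variables, I obtain $K\in\mathbb{N}$ such that $|\sigma^M-\sigma^{X_m}|<\epsilon$ for all $m\geq K$. Since $\mathcal{U}$ is a nonprincipal ultrafilter on $\mathbb{N}$, every cofinite set---in particular $\{m\colon m\geq K\}$---belongs to $\mathcal{U}$, so the $\mathcal{U}$-limit of the sequence $(\sigma^{X_m})$ agrees with its ordinary limit and lies within $\epsilon$ of $\sigma^M$; that is, $|\sigma^M-\lim_{\mathcal{U}}\sigma^{X_m}|\leq\epsilon$.

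The next step is to identify $\lim_{\mathcal{U}}\sigma^{X_m}$ with $\sigma^N$, which is precisely \L o\'s's theorem for the metric ultraproduct. The one point deserving comment is that the $X_m$ are only \emph{almost} $L$-structures rather than genuine ones; however, $N$ itself is an actual $L$-structure, as verified above, and the inductive proof of \L o\'s goes through unchanged here. The atomic and connective cases are immediate from the definition of the interpretations in $N$ and from uniform continuity of the connectives, while the quantifier case---the only place one might fear the weakened modulus of continuity for function and predicate symbols intrudes---uses solely that each $\inf_y$ is nearly attained in the finite net $X_m$, so that a witnessing sequence can be assembled coordinatewise and transported to $N$. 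Thus $\sigma^N=\lim_{\mathcal{U}}\sigma^{X_m}$, and combining this with the previous paragraph gives $|\sigma^M-\sigma^N|\leq\epsilon$. Letting $\epsilon\to 0$ yields $\sigma^M=\sigma^N$, and since $\sigma$ was arbitrary, $M\equiv N$.

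I expect the only genuine subtlety to be the justification that \L o\'s's theorem applies to an ultraproduct of almost $L$-structures; everything else is routine bookkeeping around the previous lemma and the cofiniteness of members of a nonprincipal ultrafilter on $\mathbb{N}$. Since the quantifier step of \L o\'s never invokes the modulus of continuity, and the finiteness of the nets $X_m$ guarantees attainment of the relevant infima, this subtlety is readily dispatched; alternatively, one may simply record that the ultraproduct value of a sentence is its $\mathcal{U}$-limit over the factors, which is exactly the content needed to close the argument.
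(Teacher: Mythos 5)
Your proof is correct and takes essentially the same route as the paper: the paper's one-line argument likewise combines the preceding lemma (giving $\sigma^M=\lim_{\mathcal{U}}\sigma^{X_m}$ for every sentence $\sigma$, since $\mathcal{U}$ is nonprincipal) with the fact that the value of a sentence in the ultraproduct $N$ is the $\mathcal{U}$-limit of its values in the factors. Your explicit check that \L o\'s's theorem goes through for an ultraproduct of almost $L$-structures---using that $N$ is a genuine $L$-structure and that infima are attained in the finite nets---is a sound elaboration of a point the paper leaves implicit.
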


\begin{proof}
By the above lemma, for any sentence $\sigma$, we have $\sigma^M=\lim_\mathcal{U}\sigma^{X_m}$.
\end{proof}

This proves Theorem~\ref{compfin} because then every $X_m$ will be a finite $L$-structure. To state the general result, we need the following concepts, which will be used also in the next section.

\begin{df}
An $L$-structure $Z$ is \emph{almost pseudofinite} (resp.\ \emph{almost pseudocompact}) if whenever $\sigma^A=0$ for an $L$-sentence $\sigma$ and all finite (resp.\ compact) almost $L$-structures $A$, then $\sigma^Z=0$.

Similarly, $Z$ is \emph{almost strongly pseudofinite} (resp.\ \emph{almost strongly pseudocompact}) if whenever $\sigma^Z=0$ for an $L$-sentence $\sigma$, then there is a finite (resp.\ compact) almost $L$-structure $A$ such that $\sigma^A=0$. 
\end{df}

(In all four cases, the ``almost'' version of a property is a consequence of the property itself.)

\begin{lemma} $Z$ is almost pseudofinite if and only if, whenever $\epsilon>0$ and $\sigma$ is an $L$-sentence such that $\sigma^Z=0$, then there is a finite almost $L$-structure $A$ such that $\sigma^A\leq \epsilon$.
\end{lemma}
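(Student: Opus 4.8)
The plan is to prove this as a near-verbatim reprise of the implication (1)$\Leftrightarrow$(2) already established for ordinary pseudofiniteness in Lemma~\ref{core-equiv}, with ``finite $L$-structure'' systematically replaced by ``finite almost $L$-structure.'' The two directions are handled separately, and both hinge on the ``$\dotminus$'' trick for turning one approximate condition into another.

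For the forward direction, suppose $Z$ is almost pseudofinite, and suppose toward a contradiction that the right-hand condition fails. Then there is an $L$-sentence $\sigma$ with $\sigma^Z=0$ and an $\epsilon>0$ such that $\sigma^A\geq\epsilon$ for every finite almost $L$-structure $A$. First I would observe that the sentence $\epsilon\dotminus\sigma$ then satisfies $(\epsilon\dotminus\sigma)^A=0$ for all such $A$. Applying the definition of almost pseudofiniteness to this sentence yields $(\epsilon\dotminus\sigma)^Z=0$, i.e.\ $\sigma^Z\geq\epsilon$, contradicting $\sigma^Z=0$.

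For the reverse direction, assume the right-hand condition and let $\sigma$ be any $L$-sentence with $\sigma^A=0$ for all finite almost $L$-structures $A$; I must show $\sigma^Z=0$. Suppose not, so $\sigma^Z=:r>0$. Pick any $\epsilon\in(0,r)$. The hypothesis (applied with this $\epsilon$ and, strictly, to a sentence expressing that $\sigma^Z=0$ --- here one instead argues directly as in the ``Note for future reference'' remark in the proof of Lemma~\ref{core-equiv}) produces a finite almost $L$-structure $A$ with $|\sigma^A-r|\leq\epsilon$, whence $\sigma^A\geq r-\epsilon>0$, contradicting $\sigma^A=0$. This is exactly the converse argument recorded parenthetically there, now transported to the almost setting.

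The only point requiring care --- and hence the main (mild) obstacle --- is to confirm that the ``$\dotminus$'' trick and the finite-intersection machinery go through unchanged when the class of test structures is enlarged from finite $L$-structures to finite almost $L$-structures. Since the definition of almost pseudofiniteness quantifies over precisely this enlarged class, and since the value $\sigma^A$ of a sentence in an almost $L$-structure is computed by the same inductive clauses (the weakened modulus of continuity affects only which structures are admissible, not how formulae are evaluated in an admissible one), every step of the original proof remains valid verbatim. I therefore expect no genuine difficulty, only the bookkeeping of checking that ``finite'' may everywhere be read as ``finite almost.''
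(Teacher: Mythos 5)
Your proposal is correct and takes essentially the same approach as the paper, whose entire proof is the remark that the claim is ``analogous to the equivalence of (1) and (2) in Lemma~\ref{core-equiv}'': you carry out exactly that analogy, using the $\dotminus$ trick in one direction and the paper's ``Note for future reference'' argument in the other, and you rightly observe that sentence evaluation in almost $L$-structures follows the same inductive clauses, so nothing breaks. The only slight imprecision is your phrase ``a sentence expressing that $\sigma^Z=0$'': the sentence to which the hypothesis is applied is $|\sigma - r|$, whose value in $Z$ is $0$, but your subsequent step (producing $A$ with $|\sigma^A - r|\leq \epsilon$) shows this is what you meant.
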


\begin{proof}
This is analogous to the equivalence of (1) and (2) in Lemma~\ref{core-equiv}.
\end{proof}

Thus, since $M\equiv N$ and $N$ is an ultraproduct of finite almost $L$-structures, we obtain:

\begin{prop}
Every compact $L$-structure is \emph{almost} pseudofinite and thus \begin{center}
	pseudocompactness $\Rightarrow$ almost pseudofiniteness $\Rightarrow$ almost pseudocompactness.
\end{center}
\end{prop}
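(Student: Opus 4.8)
The plan is to treat the proposition as two separate claims: the assertion that every compact $L$-structure is almost pseudofinite, and the displayed chain of two implications, which I would then obtain as purely formal consequences of the first claim together with the observation that every finite almost $L$-structure is a compact almost $L$-structure.

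For the first claim, I would simply read off what the preceding development has already assembled. The corollary furnishes $M\equiv N$, where $N=\prod_\mathcal{U} X_m$ and each $X_m$ is a finite almost $L$-structure, and the lemma characterizing almost pseudofiniteness reduces the goal to the following: for every $L$-sentence $\sigma$ with $\sigma^M=0$ and every $\epsilon>0$, exhibit a finite almost $L$-structure $A$ with $\sigma^A\leq\epsilon$. Given such $\sigma$ and $\epsilon$, elementary equivalence gives $\sigma^N=0$, and since $\sigma^N=\lim_\mathcal{U}\sigma^{X_m}$, the set $\{m\colon \sigma^{X_m}\leq\epsilon\}$ belongs to $\mathcal{U}$ and is in particular nonempty; any witnessing $X_m$ is the desired finite almost $L$-structure. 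Thus $M$ is almost pseudofinite.

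For the chain, I would verify the two implications in turn, working from the definitions. For the first, let $M$ be pseudocompact and let $\sigma$ be an $L$-sentence with $\sigma^A=0$ for all finite almost $L$-structures $A$. By the first claim every compact $L$-structure $C$ is almost pseudofinite, so this hypothesis forces $\sigma^C=0$ for every compact $L$-structure $C$; pseudocompactness of $M$ then yields $\sigma^M=0$, so $M$ is almost pseudofinite. For the second, let $M$ be almost pseudofinite and let $\sigma$ satisfy $\sigma^A=0$ for all compact almost $L$-structures $A$. Since finite almost $L$-structures are among the compact ones, $\sigma$ in particular vanishes on all finite almost $L$-structures, whence almost pseudofiniteness of $M$ gives $\sigma^M=0$; thus $M$ is almost pseudocompact.

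I do not anticipate a genuine obstacle: all the analytic content sits in the earlier term- and formula-approximation lemmas that produced $M\equiv N$, and what remains is bookkeeping about the quantifier ``for all (finite or compact) almost $L$-structures.'' The only point requiring a moment's care is the inclusion of finite almost $L$-structures into compact almost $L$-structures used in the second implication, which is legitimate because a finite metric space is compact and the weakened modulus-of-continuity clauses defining an almost structure carry over verbatim.
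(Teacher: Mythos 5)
Your proposal is correct and follows essentially the same route as the paper: the paper's (implicit) proof likewise combines the corollary $M\equiv N$ with the lemma characterizing almost pseudofiniteness via $\epsilon$-approximation by finite almost $L$-structures, and then treats the displayed chain as the same formal bookkeeping you carry out (compact structures being almost pseudofinite for the first implication, finite almost structures being compact almost structures for the second). You have merely made explicit the details the paper compresses into ``Thus \dots we obtain.''
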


We conjecture that, in full generality, the notions of pseudofiniteness and pseudocompactness always agree. We will see in the next section (Example~\ref{circle}), however, that the strong versions are not equivalent.

\section{Injectivity-surjectivity of endofunctions}\label{sec-injsur}

The equivalence of injectivity and surjectivity of definable endofunctions is a direct consequence of pseudofiniteness in classical logic. Here, we experiment with the straightforward translation of that property in metric structures (although the ``right'' corresponding property may be very different of course).

Explicitly, in classical logic, if $M$ is pseudofinite and $f\colon M\to M$ is definable, then $f$ is injective if and only if $f$ is surjective.  The natural question is to ask whether this holds in the continuous setting.  Already in the simplest case this seems to require a few more assumptions.  Suppose that $M$ is a metric $L$-structure:

\begin{df}
Say that $f\colon M\to M$ is \emph{formula-definable} if there is an $L$-formula $\varphi(x,y,z)$, where $z$ is a tuple of variables, and there is a tuple $a$ from $M$ such that $d(f(x),y)=\varphi(x,y,a)$ for all $x,y\in M$.
\end{df}

\begin{prop}
Suppose that $M$ is $\omega$-saturated and strongly pseudofinite.  Suppose that $f\colon M\to M$ is a formula-definable function.  Then $f$ being injective implies that $f$ is surjective.
\end{prop}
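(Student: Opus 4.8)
The plan is to encode ``$f$ is total and injective'' as $L$-sentences taking value $0$ in $M$, apply strong pseudofiniteness to land in a genuine finite structure, and there invoke the ordinary pigeonhole principle. First I would pass to the expansion $M':=(M,a)$ by constants $\bar c$ naming the parameter $a$; by the preservation lemmas this is again $\omega$-saturated and strongly pseudofinite, and now $\varphi(x,y,\bar c)$ is an $L'$-formula with $\varphi^{M'}(x,y)=d(f(x),y)$. Totality is recorded by $\sigma_0:=\sup_x\inf_y\varphi(x,y,\bar c)$, which has $\sigma_0^{M'}=0$ (take $y=f(x)$). To capture injectivity I would introduce $\delta(x,x'):=\inf_y\min(1,\varphi(x,y,\bar c)+\varphi(x',y,\bar c))$, an honest $L'$-formula, and note that $\delta^{M'}(x,x')=d(f(x),f(x'))$ (the triangle inequality gives ``$\geq$'', and $y=f(x)$ gives equality). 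Injectivity of $f$ says exactly that $\delta^{M'}(x,x')=0\Rightarrow d(x,x')=0$; since $M'$ is $\omega$-saturated, the Fact from the introduction yields an increasing continuous $\alpha$ with $\alpha(0)=0$ such that $\sigma_1^{M'}=0$, where $\sigma_1:=\sup_{x,x'}(d(x,x')\dotminus\alpha(\delta(x,x')))$. Finally let $\sigma_2:=\sup_y\inf_x\varphi(x,y,\bar c)$ be the ``dense image'' sentence.

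It suffices to prove $\sigma_2^{M'}=0$: once the image is dense, $\omega$-saturation upgrades this to surjectivity, since for any $y_0$ the conditions $\{\varphi(x,y_0,\bar c)\leq 1/n:n\geq 1\}$ are finitely satisfiable and hence jointly realized by some $x_*$ with $f(x_*)=y_0$. So suppose instead that $\sigma_2^{M'}=r>0$, and consider the single $L'$-sentence $\sigma^*:=\max(\sigma_0,\sigma_1,r\dotminus\sigma_2)$. Its value in $M'$ is $\max(0,0,r\dotminus r)=0$, so by strong pseudofiniteness there is a \emph{finite} $L'$-structure $A$ with $(\sigma^*)^A=0$; hence $\sigma_0^A=\sigma_1^A=0$ while $\sigma_2^A\geq r$.

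Now I would run the pigeonhole argument inside the finite metric structure $A$. From $\sigma_0^A=0$, for each $x\in A$ the (attained) infimum supplies some $g(x)\in A$ with $\varphi^A(x,g(x),\bar c)=0$, defining a total map $g\colon A\to A$. If $g(x)=g(x')=:y^*$, then evaluating the infimum defining $\delta$ at $y=y^*$ gives $\delta^A(x,x')=0$, so $\sigma_1^A=0$ forces $d(x,x')\leq\alpha(0)=0$, i.e.\ $x=x'$; thus $g$ is injective. As $A$ is finite, $g$ is therefore surjective, so every $y\in A$ admits some $x$ with $\varphi^A(x,y,\bar c)=0$, giving $\sigma_2^A=0$ and contradicting $\sigma_2^A\geq r>0$. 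Hence $\sigma_2^{M'}=0$, and $f$ is surjective.

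The point I expect to be the crux is precisely the use of the \emph{strong} (exact-zero) form of pseudofiniteness in the previous paragraph. A tempting shortcut is to write $M'\equiv\prod_{\mathcal U}A_i$ with each $A_i$ finite and argue pointwise; but plain pseudofiniteness only yields $\sigma_0^{A_i},\sigma_1^{A_i}\to_{\mathcal U}0$, and a map that is merely \emph{approximately} injective on a finite metric space need not be injective---two distinct points collapsed by $g$ may lie at arbitrarily small positive distance---so the pigeonhole principle genuinely breaks. What makes the argument go through is that strong pseudofiniteness produces one finite structure in which totality, injectivity, \emph{and} the surjectivity defect $\sigma_2\geq r$ all hold \emph{exactly}, turning the finite pigeonhole count into an honest contradiction.
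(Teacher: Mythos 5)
Your proof is correct and follows essentially the same strategy as the paper's: use $\omega$-saturation together with the Fact from the introduction to encode totality, injectivity (via a modulus $\alpha$), and a quantitative failure of surjectivity into a single sentence taking value $0$ in $M$, transfer that sentence exactly to a finite structure by strong pseudofiniteness, and derive a contradiction there from attained infima and the pigeonhole principle. The differences are cosmetic: the paper keeps the parameter as a variable under $\inf_z$ rather than expanding by constants, witnesses non-surjectivity by an $\epsilon$-gap obtained directly from saturation instead of your case split on the dense-image sentence $\sigma_2$, and adds a ``functionality'' conjunct $Q(z)$ that your formula $\delta$ and choice-of-preimage definition of $g$ make unnecessary.
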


\begin{proof}
Suppose that $f$ is injective but not surjective.  By $\omega$-saturation, there is $y\in M$ and an $\epsilon>0$ such that $d(f(x),y)\geq \epsilon$ for all $x\in M$. Fix $\varphi(x,y,z)$ and $a\in M$ such that $d(f(x),y)=\varphi(x,y,a)$ for all $x,y\in M$.  Note that, for all $x,y_1,y_2\in M$, we have \[ \max(\varphi(x,y_1,a),\varphi(x,y_2,a))=0\Rightarrow d(y_1,y_2)=0 .\]  Since $(M,a)$ is $\omega$-saturated, there is an increasing continuous function $\alpha\colon[0,1]\to [0,1]$ satisfying $\alpha(0)=0$ and such that, for all $x,y_1,y_2\in M$, we have \[ d(y_1,y_2)\leq \alpha(\max(\varphi(x,y_1,a),\varphi(x,y_2,a)) .\]  Similarly, since $f$ is injective, there is another increasing, continuous function $\beta\colon[0,1]\to [0,1]$ satisfying $\beta(0)=0$ and such that, for all $x_1,x_2,y\in M$, we have \[ d(x_1,x_2)\leq \beta(\max(\varphi(x_1,y,a),\varphi(x_2,y,a))) .\]  Consider the following formulae: \begin{align*}
	P(z) &:= \sup_x \inf_y \varphi(x,y,z) ,\\
	Q(z) &:= \sup_{x,y_1,y_2} [d(y_1,y_2)\dotminus \alpha(\max(\varphi(x,y_1,z),\varphi(x,y_2,z))] ,\\
	R(z) &:= \sup_{x_1,x_2,y} [d(x_1,x_2)\dotminus \beta(\max(\varphi(x_1,y,z),\varphi(x_2,y,z)))] ,\\
	S(z) &:= \inf_y\sup_x (\epsilon\dotminus \varphi(x,y,z)) .
\end{align*} Then \[ M\models \inf_z \max(P(z),Q(z),R(z),S(z))=0. \] Since $M$ is strongly pseudofinite, there is a finite $L$-structure $A$ such that \[ A\models \inf_z \max(P(z),Q(z),R(z),S(z))=0. \] Since $A$ is finite, $\inf$ quantifiers are actually realized, and thus there is $a_0\in A$ such that $\varphi(x,y,a_0)$ defines an injective function $A\to A$ which is not surjective, leading to a contradiction.
\end{proof}

\begin{prop}
Suppose that $M$ is $\omega$-saturated and strongly pseudofinite.  Suppose that $f\colon M\to M$ is a formula-definable function.  Then $f$ being surjective implies that $f$ is injective.
\end{prop}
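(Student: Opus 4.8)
The plan is to mirror the proof of the preceding proposition almost verbatim, transferring the situation to a finite structure via strong pseudofiniteness and then using the fact that a surjective self-map of a finite set is automatically injective. First I would assume, towards a contradiction, that $f$ is surjective but not injective. Here the pathological witness requires no appeal to saturation, in contrast to the non-surjectivity used before: non-injectivity directly supplies $x_1,x_2\in M$ with $f(x_1)=f(x_2)$ and $\delta:=d(x_1,x_2)>0$. As before, fix $\varphi(x,y,z)$ and $a$ with $d(f(x),y)=\varphi(x,y,a)$ for all $x,y$, and extract from $\omega$-saturation of $(M,a)$ together with the Fact an increasing, continuous $\alpha\colon[0,1]\to[0,1]$ with $\alpha(0)=0$ witnessing the single-valuedness implication $\max(\varphi(x,y_1,a),\varphi(x,y_2,a))=0\Rightarrow d(y_1,y_2)=0$.

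Next I would write down four formulae in $z$ encoding the relevant properties of the ``graph'' $\{\varphi(x,y,z)=0\}$: totality $P(z):=\sup_x\inf_y\varphi(x,y,z)$ and functionality $Q(z):=\sup_{x,y_1,y_2}[d(y_1,y_2)\dotminus\alpha(\max(\varphi(x,y_1,z),\varphi(x,y_2,z)))]$ exactly as in the previous proof; surjectivity $U(z):=\sup_y\inf_x\varphi(x,y,z)$, which replaces the injectivity formula $R$; and non-injectivity with gap $\delta$, namely $T(z):=\inf_{x_1,x_2,y}\max(\varphi(x_1,y,z),\varphi(x_2,y,z),\delta\dotminus d(x_1,x_2))$, which replaces the non-surjectivity formula $S$. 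One checks that each takes value $0$ at $z=a$: $P(a)=0$ and $Q(a)=0$ as before; $U(a)=0$ since surjectivity gives, for each $y$, some $x$ with $\varphi(x,y,a)=0$; and $T(a)=0$ by plugging in $x_1,x_2$ and $y=f(x_1)=f(x_2)$. Hence $M\models\inf_z\max(P(z),Q(z),U(z),T(z))=0$.

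Finally, strong pseudofiniteness yields a finite $L$-structure $A$ satisfying the same sentence, and since $A$ is finite the outer $\inf_z$ is realized at some tuple $a_0$ with $\max(P(a_0),Q(a_0),U(a_0),T(a_0))=0$. Reading off the four conjuncts: $P(a_0)=0$ and $Q(a_0)=0$ force the zero set of $\varphi(\cdot,\cdot,a_0)$ to be the graph of a genuine total, single-valued function $g\colon A\to A$; $U(a_0)=0$ forces $g$ to be surjective; and $T(a_0)=0$, with its inner infimum realized in the finite $A$, produces $x_1,x_2,y\in A$ with $g(x_1)=y=g(x_2)$ and $d(x_1,x_2)\geq\delta>0$, so that $g$ is not injective. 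A surjective non-injective self-map of the finite set $A$ is impossible, giving the desired contradiction.

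The proof presents no serious obstacle beyond the previous proposition, since it is structurally parallel; the only genuinely new ingredient is the formula $T$. The point to be careful about is exactly the approximate-to-exact transfer that is characteristic of these arguments: the formulae $P,Q,U,T$ carry only their intended \emph{approximate} meaning in $M$, and it is solely because quantifiers (including the inner $\inf_{x_1,x_2,y}$ defining $T$) are realized in a finite structure that the transferred condition recovers an \emph{exact} total function together with an exact non-injective pair. I would therefore take some care in verifying that realizing $T$'s infimum indeed yields a pair at distance $\geq\delta$ with a common image, rather than merely an approximate such pair.
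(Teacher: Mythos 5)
Your proof is correct and follows the paper's overall strategy: encode totality, functionality, surjectivity, and non-injectivity as formulae in the parameter variable $z$, combine them under $\inf_z\max(\cdot)$, transfer to a finite structure by strong pseudofiniteness, and realize all quantifiers there to reach a pigeonhole contradiction. The one genuine divergence is how non-injectivity is encoded. The paper takes witnesses $b_1,b_2$ with $f(b_1)=f(b_2)$ and $d(b_1,b_2)=\epsilon>0$ and then applies the $\omega$-saturation/implication trick a \emph{second} time to obtain a modulus $\gamma$ with $d(w_1,w_2)\leq\gamma(\max(\varphi(b_1,w_1,a),\varphi(b_2,w_2,a)))$ for all $w_1,w_2$; its formula $R(z)$ then asserts the existence of $x_1,x_2$ at distance $\epsilon$ whose images are forced to coincide through $\gamma$, via a universally quantified (sup) inner condition. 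Your formula $T(z):=\inf_{x_1,x_2,y}\max\bigl(\varphi(x_1,y,z),\varphi(x_2,y,z),\delta\dotminus d(x_1,x_2)\bigr)$ instead quantifies existentially over the common image $y$ itself, which is available precisely because non-injectivity is an existential statement; this needs no extra modulus and no appeal to saturation beyond what $Q$ already requires, so your version is slightly more economical than the paper's. Both arguments lean on the same finiteness facts at the end (inf quantifiers are exactly realized in a finite structure), and your closing caution is aimed at exactly the right spot: once $T$'s infimum is attained in $A$, you get an exact common image and an exact pair at distance at least $\delta$, not merely approximate ones.
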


\begin{proof}
Suppose that $f$ is surjective but not injective.  Define the formulae $P(z)$ and $Q(z)$ as in the previous proof.  Since $f$ is not injective, we can find $b_1,b_2\in M$ such that $d(b_1,b_2)=:\epsilon>0$ and $f(b_1)=f(b_2)$.  Consequently, there is an increasing continuous function $\gamma\colon[0,1]\to [0,1]$ satisfying $\gamma(0)=0$ and so that, for all $w_1,w_2\in M$, we have \[ d(w_1,w_2)\leq \gamma(\max(\varphi(b_1,w_1,a),\varphi(b_2,w_2,a))) .\]   Consider the formulae \begin{gather*} R(z):=\inf_{x_1,x_2}\max\bigl(|d(x_1,x_2)-\epsilon|,g(x_1,x_2,z)\bigr) \\ \text{where}\ g(x_1,x_2,z):=\sup_{w_1,w_2}[d(w_1,w_2)\dotminus \gamma(\max(\varphi(x_1,w_1,z),\varphi(x_2,w_2,z)))] ,\end{gather*} and \[ S(z):=\sup_y\inf_x \varphi(x,y,z) .\] Then \[ M\models \inf_z\max(P(z),Q(z),R(z),S(z))=0. \]  Thus this is true in a finite $L$-structure $A$, implying that there is a surjective definable function $A\to A$ which is not injective, contradiction.
\end{proof}

\begin{rmk}
The above propositions only required that $M$ was \emph{almost} strongly pseudofinite.
\end{rmk}

\begin{rmk}
A similar argument will prove directly that if $X$ is the zeroset of a formula in a power of $M$, and $f\colon X\to X$ is formula-definable (so there are $\varphi,a$ such that $d(f(x),y)=\varphi(x,y,a)$ for all $x,y\in X$), then $f$ is injective if and only if $f$ is surjective.
\end{rmk}

\begin{ex}\label{circle}
Let $\mathbb{S}^1=\{z\in\mathbb{C}\colon |z|=1\}$ have the metric which is half the one induced by the canonical metric in $\mathbb{C}$ (so it has values in $[0,1]$), and consider the ternary relation $P(u,v,w)=d(uv,w)$, where the usual product in $\mathbb{C}$ is used. Consider also $f\colon \mathbb{S}^1\to \mathbb{S}^1$, $f(z)=z^2$, which is surjective, but not injective. Then the relational structure $(\mathbb{S}^1,P)$ (in the minimal adequate language with the right modulus of uniformity for $P$) is compact, hence pseudofinite. Because of the total categoricity of compact models, $(\mathbb{S}^1,P)$ is saturated. Also, $f$ is formula-definable in $(\mathbb{S}^1,P)$: we have $d(f(z),w)=P(z,z,w)$. Therefore, by the results above, $(\mathbb{S}^1,P)$ is not strongly pseudofinite.
\end{ex}

\begin{ex}\label{interval}
Let $[0,1]$ have the usual metric, and consider $f\colon[0,1]\to[0,1]$, $f(x)=x/2$, which is injective, but not surjective. Similarly to the previous example, one obtains a compact, pseudofinite, but not strongly pseudofinite structure.
\end{ex}

Note that those examples show that the injective-surjective phenomenon does not necessarily hold for pseudofinite structures in the continuous sense, even for compact structures.

\begin{question}
Is there a natural property of endofunctions in continuous logic corresponding to injectivity-surjectivity, and which holds in pseudofinite structures? We confess our failure in detecting any such property; for example, the function $f\colon[0,1]\to[0,1]$ defined thus: $f(x)=2x$ if $0\leq x\leq\frac{1}{2}$ and $f(x)=1$ otherwise, is surjective, yet $f^{-1}(1)$ is \emph{huge} by any metric or topological standard.
\end{question}

It would be interesting to know if we could strengthen our injectivity-surjectivity results to hold for arbitrary definable functions in strongly pseudofinite structures.  Let us mention a few remarks towards proving this.  Suppose that $M$ is an $L$-structure and $P\colon M^n\to[0,1]$ is a definable predicate in $M$ (over some countable parameterset).  Let $L_P$ be the language obtained by adding a predicate symbol for $P$ and let $(M,P)$ be the natural expansion of $M$ to an $L_P$-structure.  Given an $L_P$-formula $\psi(y)$ without parameters and an $L$-formula $\varphi(x,a)$ with parameters $a$, where $|x|=n$, one naturally gets an $L(a)$-formula $\psi_\varphi$ by replacing every occurrence of $P(t)$ in $\psi$ with $\varphi(t,a)$ (for any occurring tuple $t$ of terms).

\begin{lemma}
If $M$ is $\omega_1$-saturated, then $(M,P)$ is $\omega_1$-saturated.
\end{lemma}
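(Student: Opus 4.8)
The claim is that if $M$ is $\omega_1$-saturated as an $L$-structure, then the expansion $(M,P)$ is $\omega_1$-saturated as an $L_P$-structure, where $P$ is a definable predicate over a countable parameter set. The plan is to take a countable $L_P$-type $\Sigma(y)$ over a countable parameter set $C$ in $(M,P)$ that is finitely satisfiable (more precisely, approximately finitely satisfiable in the continuous sense), and to produce an $L$-type over $M$ whose realization in $M$ automatically realizes $\Sigma$ in the expansion.

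\textbf{The translation device.}
The key tool is the syntactic substitution $\psi\mapsto\psi_\varphi$ described just before the lemma: each $L_P$-condition $\psi(y)$ translates into an $L(a)$-condition $\psi_\varphi(y)$ by replacing every occurrence $P(t)$ with $\varphi(t,a)$, where $\varphi(x,a)$ is the $L$-formula (with parameters $a$) defining $P$. First I would observe that this substitution commutes with interpretation: for every $L_P$-formula $\psi$ and every tuple $b$ from $M$, one has $\psi^{(M,P)}(b)=(\psi_\varphi)^M(b)$. This is an immediate induction on the complexity of $\psi$, using that the only difference between the two structures is the interpretation of $P$, which is exactly $\varphi(\cdot,a)$ by definition of $(M,P)$. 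Here I should note the one point requiring care: $P$ is a genuine \emph{definable predicate}, i.e.\ a uniform limit of formulae, rather than a single formula. So strictly $\varphi$ is a definable predicate and $\psi_\varphi$ is an $L(a)$-\emph{definable predicate}; the substitution and the commutation identity still go through, since definable predicates are closed under the continuous operations used in forming conditions.

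\textbf{Reducing $\omega_1$-saturation of the expansion to that of $M$.}
Given a countable set $\Sigma(y)=\{\psi_i(y)=0:i\in\mathbb{N}\}$ of $L_P$-conditions over countably many parameters $c\subseteq M$, which is finitely approximately satisfiable in $(M,P)$, I would form the translated set $\Sigma_\varphi(y)=\{(\psi_i)_\varphi(y)=0:i\in\mathbb{N}\}$, now a countable set of $L(a,c)$-conditions over the \emph{countable} parameter set $a\cup c$. By the commutation identity, a tuple $b$ satisfies a finite subset of $\Sigma$ in $(M,P)$ to within $\epsilon$ exactly when it satisfies the corresponding finite subset of $\Sigma_\varphi$ in $M$ to within $\epsilon$; hence $\Sigma_\varphi$ is an approximately finitely satisfiable $L$-type over a countable parameter set in $M$. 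Since $M$ is $\omega_1$-saturated and $a\cup c$ is countable, $\Sigma_\varphi$ is realized by some $b\in M$. The same commutation identity then gives $\psi_i^{(M,P)}(b)=((\psi_i)_\varphi)^M(b)=0$ for all $i$, so $b$ realizes $\Sigma$ in $(M,P)$, proving $\omega_1$-saturation.

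\textbf{The main obstacle.}
I expect the only delicate step to be the passage from formulae to genuine definable predicates in the commutation identity, together with the bookkeeping that the parameter set $a$ defining $P$ is itself countable (this is given: $P$ is defined over a countable parameter set), so that $a\cup c$ remains countable and the appeal to $\omega_1$-saturation of $M$ is legitimate. Once the substitution lemma $\psi^{(M,P)}=(\psi_\varphi)^M$ is established for definable predicates, everything else is formal.
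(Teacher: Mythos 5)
Your overall route is the paper's route: translate the countable $L_P$-type into an $L$-type over the still-countable parameter set $a\cup c$ and invoke $\omega_1$-saturation of $M$. However, the one step you dismiss as formal is precisely where the paper's proof does its only real work, and as written there is a gap there. Your translated type $\Sigma_\varphi$ consists of conditions $(\psi_i)_\varphi(y)=0$ in which, as you yourself note, $(\psi_i)_\varphi$ is a \emph{definable predicate}, not an $L(a\cup c)$-formula (because $P$ is only a uniform limit of formulae). But $\omega_1$-saturation, as defined, is a statement about sets of \emph{formula} conditions; so the sentence ``Since $M$ is $\omega_1$-saturated and $a\cup c$ is countable, $\Sigma_\varphi$ is realized'' is not literally an application of the hypothesis. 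Your earlier caveat about definable predicates addresses only the substitution/commutation identity, not this appeal to saturation.

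To close the gap one must convert each definable-predicate condition back into formula conditions, which is exactly what the paper does: for each $i$ and $n$ choose an $L$-formula $\varphi^i_n$ (with parameters from the countable set defining $P$) approximating $P$ well enough that $|\psi^i_{\varphi^i_n}(b)-\psi_i^{(M,P)}(b)|\leq \frac{1}{n}$ for all $b$ --- this makes sense because the value of $\psi_i$ depends uniformly continuously on the predicate substituted for $P$ --- and replace the condition $\psi_i(y)=0$ by the countable family of formula conditions $\psi^i_{\varphi^i_n}(y)\leq \frac{1}{n}$, $n\geq 1$. This preserves finite satisfiability (exact satisfaction of $\psi_i(y)=0$ gives $\psi^i_{\varphi^i_n}(y)\leq\frac{1}{n}$), keeps the parameter set countable, and any realization $b$ of all the new conditions realizes the original type, since $\psi_i^{(M,P)}(b)\leq \psi^i_{\varphi^i_n}(b)+\frac{1}{n}\leq \frac{2}{n}$ for every $n$. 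If you prefer, you may instead cite the standard fact that definable predicates may be used in place of formulae when forming types; but the proof of that fact is the very reduction above, so deferring to it defers the entire content of the lemma.
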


\begin{proof}
Suppose that $\{\psi^i(y)=0 \colon i\in I \}$ is a finitely satisfiable collection of $L_P$-conditions in countably many parameters.  By replacing $\psi^i(y)=0$ by $\psi_{\varphi^i_n}^i(y)\leq \frac{1}{n}$, where $\varphi^i_n$ is an $L$-formula approximating $P$ well enough, we obtain a finitely satisfiable collection of $L$-conditions in countably many parameters.  Then use $\omega_1$-saturation of $M$.
\end{proof}

\begin{prop}
Suppose that $f\colon M\to M$ is a definable function in an $\omega_1$-saturated structure $M$, let $P(x,y)=d(f(x),y)$, and suppose further that $(M,P)$ is almost strongly pseudofinite. Then $f$ is injective if and only if it is surjective.
\end{prop}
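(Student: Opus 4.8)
The plan is to reduce the general case to the formula-definable case already settled by the two preceding propositions. Although $f$ is only assumed definable in $M$, in the expansion $N:=(M,P)$ it becomes \emph{formula-definable}: by construction $d(f(x),y)=P(x,y)$, so the atomic $L_P$-formula $P(x,y)$ witnesses formula-definability of $f$ in $N$ (with empty parameter tuple). Thus the essential move is to work not in $M$ but in the expanded structure $N$, where the definable predicate $d(f(x),y)$ has been promoted to a symbol of the language.

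I would then check that $N$ satisfies the hypotheses of the earlier propositions. First, by the lemma immediately preceding this proposition, $\omega_1$-saturation of $M$ yields $\omega_1$-saturation of $N$; in particular $N$ is $\omega$-saturated, which is all that is needed to extract the auxiliary moduli $\alpha,\beta,\gamma$ via the $\omega$-saturation criterion recalled in the introduction. Second, by hypothesis $N$ is almost strongly pseudofinite, and by the remark following the two propositions their proofs require only almost strong pseudofiniteness rather than strong pseudofiniteness itself. Consequently both propositions apply verbatim to the formula-definable function $f$ in $N$, giving that $f$ is injective if and only if it is surjective. Since injectivity and surjectivity of the map $f\colon M\to M$ are intrinsic and insensitive to whether we view $f$ as definable over $L$ or formula-definable over $L_P$, this is exactly the assertion to be proved.

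There is no substantial obstacle here: the entire difficulty has been absorbed into the two earlier propositions and the saturation lemma, and the present argument is purely a transfer. The one point worth a sentence of justification is that the finite \emph{almost} $L_P$-structures $A$ produced by almost strong pseudofiniteness still deliver the desired contradiction. This is because the contradiction in those proofs is combinatorial---an injective function from a finite set to itself is surjective---and depends only on $A$ having a finite carrier and on the fact that $\inf$ and $\sup$ over a finite set are attained; it does not use any modulus-of-continuity property of the interpreted symbols, which is the only way an almost $L_P$-structure differs from a genuine one.
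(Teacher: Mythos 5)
Your proof is correct and follows exactly the paper's argument: the paper's own proof is the one-line observation that $f$ becomes formula-definable (via the atomic formula $P(x,y)$) in the $\omega_1$-saturated, almost strongly pseudofinite structure $(M,P)$, so the two earlier propositions---which, per the paper's remark, need only almost strong pseudofiniteness---apply. Your closing justification that the combinatorial contradiction in those proofs survives in finite \emph{almost} $L_P$-structures is a correct elaboration of what the paper leaves implicit in that remark.
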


\begin{proof}
We have that $f$ becomes formula-definable in the almost strongly pseudofinite $\omega_1$-saturated structure $(M,P)$.
\end{proof}

\begin{question}
If $M$ is strongly pseudofinite, is $(M,P)$ almost strongly pseudofinite? 
\end{question}

Of course, if the answer to the above question is positive, then the extra assumption in the previous result is superfluous.  We were only able to settle the corresponding question for pseudofinite structures:

\begin{lemma}\label{replacement}
Given any $L_P$-formula $\psi(y)$ and $\epsilon>0$, there are parameters $a$ in $M$ and an $L(a)$-formula $\varphi(x,a)$ such that $|\psi^{(M,P)}(b)-\psi_\varphi^M(b)|\leq \epsilon$ for every $b\in M$.
\end{lemma}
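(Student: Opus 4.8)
The plan is to prove, by induction on the complexity of the $L_P$-formula $\psi$, a uniform refinement of the statement: for every $L_P$-formula $\psi(y)$ and every $\epsilon>0$ there is $\delta>0$ such that for \emph{any} parameters $a$ and any $L(a)$-formula $\varphi(x,a)$ with $\sup_{c\in M^n}|P^{(M,P)}(c)-\varphi^M(c,a)|\leq\delta$, one has $\sup_{b}|\psi^{(M,P)}(b)-\psi_\varphi^M(b)|\leq\epsilon$. Granting this, the lemma follows immediately: since $P$ is a definable predicate over a countable parameter set, it is a uniform limit of $L$-formulae with parameters from that set, so for the $\delta$ that the induction produces there is a single $L(a)$-formula $\varphi(x,a)$ (using finitely many of those parameters) whose interpretation lies within $\delta$ of $P$ uniformly on $M^n$. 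Substituting this one $\varphi$ for every occurrence of $P$ in $\psi$ yields the desired $\psi_\varphi$. It is essential that a single $\varphi$ is used for all occurrences, which is legitimate because $P$ is a single predicate.

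The induction itself is routine. For the base cases, if $\psi$ is an atomic $L$-formula not involving $P$ then $\psi_\varphi=\psi$ and any $\delta$ works, while if $\psi$ is $P(t(y))$ for a tuple $t$ of $L$-terms then $\psi_\varphi=\varphi(t(y),a)$ and the required bound is exactly the hypothesized closeness of $\varphi$ to $P$ evaluated at $c=t^M(b)$, so $\delta=\epsilon$ suffices. For the connective case $\psi=u(\chi_1,\dots,\chi_k)$, fix a modulus of uniform continuity for the connective $u$ to obtain $\delta_u>0$ with $|u(s)-u(t)|\leq\epsilon$ whenever $|s_i-t_i|\leq\delta_u$ for all $i$; apply the inductive hypothesis to each $\chi_i$ with target $\delta_u$, obtaining $\delta_i$, and set $\delta=\min_i\delta_i$. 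For the quantifier case $\psi=\inf_z\chi$ (and symmetrically for $\sup_z$), the inequality $|\inf_z F(z)-\inf_z G(z)|\leq\sup_z|F(z)-G(z)|$ transfers the inductive bound through the quantifier verbatim, so the same $\delta$ and $\epsilon$ carry over.

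The only point requiring care --- and the reason the inductive statement must be formulated with $\delta$ quantified \emph{before} $\varphi$ --- is that a single approximant $\varphi$ has to serve every occurrence of $P$ throughout $\psi$ simultaneously. Phrasing the hypothesis as ``for any $\varphi$ within tolerance $\delta$'' is precisely what makes the $\delta=\min_i\delta_i$ aggregation in the connective step sound and what permits selecting one formula from the definability of $P$ at the very end; the remainder is the standard propagation of uniform closeness through the formula-building operations of continuous logic.
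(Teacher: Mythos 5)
Your proof is correct, and its skeleton is the same as the paper's: induction on the complexity of $\psi$, with the atomic case $P(t_1(y),\ldots,t_n(y))$ handled by the definability of $P$ (uniform approximability by $L$-formulae with parameters), connectives handled by their uniform continuity, and quantifiers absorbed via $|\inf_z F(z)-\inf_z G(z)|\leq\sup_z|F(z)-G(z)|$. The genuine difference is where the approximant is chosen. The paper picks $\varphi$ \emph{inside} the induction, at the atomic case, and then says connectives and quantifiers are handled ``as usual''; read literally, that inductive hypothesis only provides, for each subformula $\chi_i$ of a connective step, \emph{some} approximant $\varphi_i$ making $(\chi_i)_{\varphi_i}$ close to $\chi_i$, and these need not agree---whereas $\psi_\varphi$ is by definition obtained by substituting one and the same $\varphi$ for every occurrence of $P$. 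Your reformulation, with the tolerance $\delta$ quantified \emph{before} $\varphi$ (so that every $\delta$-good approximant works for $\psi$), is exactly what repairs this: the $\delta=\min_i\delta_i$ aggregation at connectives is then sound, and a single appeal to the definability of $P$ at the very end produces the common $\varphi$. So you prove a slightly stronger statement by the same induction; what it buys is an honest justification of the step the paper compresses into ``handled as usual,'' at the cost of a marginally heavier inductive statement.
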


\begin{proof}
Induct on the complexity of $\psi$.  First assume that $\psi$ is an atomic $L_P$-formula.  If $\psi$ is actually an $L$-formula, then there is nothing to do.  Otherwise, $\psi(y)$ is $P(t_1(y),\ldots,t_n(y))$, where $t_1,\ldots,t_n$ are $L$-terms; then choose $\varphi(x,a)$ such that $|P^M(x)-\varphi^M(x,a)|\leq \epsilon$ for all $x\in M^n$, so that $\varphi(t_1(y),\ldots,t_n(y),a)$ is the desired formula.  Connectives are handled as usual and the case of quantifiers is also immediate.
\end{proof}

\begin{lemma}
If $M$ is pseudofinite, then $(M,P)$ is almost pseudofinite.
\end{lemma}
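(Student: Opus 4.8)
The plan is to combine Lemma~\ref{replacement} with the characterization of almost pseudofiniteness provided by the lemma immediately following the definition of almost pseudofinite structures. Concretely, I want to show: given an $L_P$-sentence $\sigma$ with $\sigma^{(M,P)}=0$ and given $\epsilon>0$, there is a finite almost $L_P$-structure $A$ with $\sigma^A\leq\epsilon$. Since the converse ``almost'' reformulation is equivalent to almost pseudofiniteness, establishing this suffices.

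First I would apply Lemma~\ref{replacement} to the sentence $\sigma$ (a closed $L_P$-formula) with tolerance $\epsilon/3$, obtaining parameters $a$ in $M$ and an $L(a)$-formula $\varphi(x,a)$ such that $|\sigma^{(M,P)}-\sigma_\varphi^M|\leq\epsilon/3$, where $\sigma_\varphi$ is the $L$-sentence obtained by substituting $\varphi(\cdot,a)$ for every occurrence of $P$ in $\sigma$. Since $\sigma^{(M,P)}=0$, this gives $\sigma_\varphi^M\leq\epsilon/3$. Now $\sigma_\varphi$ is an honest $L$-sentence (over the parameters $a$), and $M$ is pseudofinite; passing to the expansion of $M$ by the constants $a$ (which is still pseudofinite, by the preservation lemma for expansion by constants), I can invoke the equivalence of (1) and (2) in Lemma~\ref{core-equiv} to find a finite $L(a)$-structure $B$ with $\sigma_\varphi^B\leq\epsilon/3+\epsilon/3=2\epsilon/3$, say. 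Here $B$ is a genuine finite $L$-structure together with an interpretation $a^B$ of the new constants.

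The key construction is to turn this finite $L$-structure $B$ into a finite almost $L_P$-structure. I interpret the new predicate symbol $P$ on $B$ precisely by $P^B:=\varphi^B(\cdot,a^B)$; the point of allowing \emph{almost} $L_P$-structures is that this interpretation need not respect the declared modulus of uniform continuity for the symbol $P$, but the values $\varphi^B(\cdot,a^B)$ certainly define \emph{some} bounded predicate on the finite set $B$, and on a finite set (where distinct points are bounded away from each other) any predicate is automatically an almost-predicate in the sense of the definition. With $P^B$ so defined, substituting back shows that $\sigma^{(B,P^B)}=\sigma_\varphi^B\leq 2\epsilon/3\leq\epsilon$. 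Thus $(B,P^B)$ is the required finite almost $L_P$-structure, and by the characterizing lemma $(M,P)$ is almost pseudofinite.

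The main obstacle, and the reason the result is stated for plain (rather than strong) pseudofiniteness, is exactly the approximation slack introduced at each stage: Lemma~\ref{replacement} only replaces $P$ by $\varphi$ up to $\epsilon$, and Lemma~\ref{core-equiv}(2) only delivers a finite structure making $\sigma_\varphi$ \emph{small} rather than zero. These two approximations compose but cannot be eliminated, which is why one lands in the ``almost'' and ``plain'' regime rather than obtaining the strong conclusion sought in the preceding Question. One should also take care that the substitution operation $\psi\mapsto\psi_\varphi$ commutes appropriately with evaluation in $(B,P^B)$ — that is, that interpreting $P^B$ as $\varphi^B(\cdot,a^B)$ makes $\sigma^{(B,P^B)}$ literally equal to $\sigma_\varphi^B$ — but this is a routine induction on formula complexity mirroring the definition of $\psi_\varphi$ and requires no continuity hypotheses on $P^B$.
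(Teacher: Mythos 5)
Your proof is correct and follows essentially the same route as the paper: apply Lemma~\ref{replacement} to replace $P$ by a formula $\varphi(x,a)$, use pseudofiniteness (via the dotminus trick behind Lemma~\ref{core-equiv}(2)) to get a finite structure where the substituted sentence is small, and then interpret $P$ on that finite structure as the corresponding instance of $\varphi$, which is legitimate exactly because almost $L_P$-structures waive the modulus-of-continuity requirement. The only difference is bookkeeping for the parameters $a$: the paper quantifies them out with $\inf_y$ and realizes the infimum in the finite structure, whereas you expand $M$ by constants and invoke the preservation lemma --- both are sound.
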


\begin{proof}
Suppose that $\sigma$ is an $L_P$-sentence such that $\sigma^{(M,P)}=0$.  By Lemma~\ref{replacement}, given $\epsilon>0$, there is $\varphi(x,a)$ such that $(M,a)\models \sigma_\varphi\leq \frac{\epsilon}{2}$.  Thus, $M\models \inf_y \sigma_\varphi(y)\leq \frac{\epsilon}{2}$.  Assuming that $M$ is pseudofinite, we have that $A\models \inf_y\sigma_\varphi(y)\leq \epsilon$ for some finite $L$-structure $A$.  Let $b\in A$ be such that $(A,b)\models \sigma_\varphi(b)\leq \epsilon$.  Then make $A$ into an almost $L_P$-structure by interpreting $P(x)$ as $\varphi(x,b)$.  Then $(A,P^A)\models \sigma \leq \epsilon$.  
\end{proof}

\section*{Appendix: Compact structures}

As mentioned in the introduction, compact structures have no proper ultrapowers; indeed, since any sequence from a compact space has a unique ultralimit, the diagonal embedding ${M}\to {M}^\mathcal{U}$ of a compact structure $M$ into any of its ultrapowers is surjective.  From this it follows that ${M}$ is totally categorical, that is, if $N\equiv {M}$ then $N\cong M$.  To see this, we use the Keisler--Shelah theorem for continuous logic: if $N\equiv M$ then $N^\mathcal{U}\cong M^\mathcal{U}$ for some ultrafilter $\mathcal{U}$, hence $N^\mathcal{U}$ is compact.  By \L o\'s's theorem, we see that ${N}$ is compact, whence we have ${N}\cong {N}^\mathcal{U}\cong {M}^\mathcal{U}\cong {M}$.

Since we used the Keisler--Shelah theorem for continuous logic, we prefer to give a more elementary proof that compact structures are totally categorical.

\begin{prop}
Suppose that ${M}$ is a compact ${L}$-structure.  Then for any ${L}$-structure ${N}$, if ${N}\equiv {M}$, then ${N}\cong {M}$.
\end{prop}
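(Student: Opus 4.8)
The plan is to give a direct proof that a compact structure $M$ is totally categorical, avoiding the Keisler--Shelah machinery. The core idea is that compactness of the metric space forces $M$ to be separable and, more importantly, approximately saturated in a strong sense: every type is realized. First I would establish that if $N \equiv M$, then $N$ is also compact. The cleanest route is to observe that compactness of a metric structure is captured by the existence, for each $\epsilon > 0$, of a finite $\epsilon$-net, and that having a finite $\epsilon$-net of size $\leq n$ is expressible (approximately) by a sentence of the form $\inf_{x_1,\ldots,x_n} \sup_y \min_i d(y,x_i)$. Since $M$ is compact it has such a net for each $\epsilon$, so the corresponding sentence has value $0$ in $M$; by elementary equivalence it has value $0$ in $N$, and an $\omega$-saturation or net-extraction argument then shows $N$ is totally bounded. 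Completeness of $N$ (which holds for all metric structures in this framework) then gives compactness of $N$.

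Next I would build the isomorphism. Since both $M$ and $N$ are compact, hence separable, I would fix countable dense sequences and attempt a back-and-forth. The key tool is that in a compact structure every finitely-satisfiable type over a finite parameter set is realized: given $a_1,\ldots,a_k \in M$ and a consistent collection of conditions in one further variable, compactness of $M$ lets me extract a convergent subsequence of approximate realizations, and the limit realizes the type exactly. This is the analog of saturation that compactness provides for free. Using this on both sides, I would construct the isomorphism by enumerating the dense sequences and alternately finding exact realizations of the relevant types, ensuring the map is distance-preserving and respects all function and predicate symbols in the limit.

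The main obstacle I expect is the passage from approximate to exact realization at the back-and-forth step, and ensuring the resulting map extends continuously to a genuine isomorphism rather than merely an elementary embedding with dense image. Concretely, at each stage I match $a_i \mapsto b_i$ so that the quantifier-free $\epsilon$-types agree up to tolerances shrinking to $0$; compactness guarantees I can always realize the next required condition exactly (not just approximately) by taking an ultralimit or subsequential limit within the compact space. The continuity moduli supplied by $L$ guarantee that a map defined on a dense set that is approximately type-preserving extends to an isometry commuting with the interpretations, and that this extension is surjective precisely because both spaces are complete and the image is dense. I would therefore organize the argument so that the limiting map is automatically total, isometric, and a homomorphism in both directions, yielding $N \cong M$.
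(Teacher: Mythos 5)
Your proposal is correct in outline, but it follows a genuinely different route from the paper's. The paper does not first prove that $N$ is compact and does not run a back-and-forth: it replaces $N$ by an ultrapower $N^{\mathcal{U}}$ to get $\omega$-saturation (invoking \L o\'s's theorem to pull the isomorphism back down), and then, for each $m$, writes a single formula $\varphi_m$ encoding the entire configuration of a finite $\frac{1}{m}$-net of $M$ --- the net property, the pairwise distances, and the predicate/function data --- and uses saturation to realize $\varphi_m=0$ exactly in $N$, producing matched dense families $\bigcup_m A_m\subseteq M$ and $\bigcup_m B_m\subseteq N$ in one shot. Your argument instead stays entirely ultrapower-free: you transfer compactness itself to $N$ via the net sentences $\inf_{x_1,\ldots,x_n}\sup_y\min_i d(y,x_i)$ (total boundedness plus the built-in completeness of metric structures), then exploit the fact that compact structures realize all approximately finitely satisfiable types --- the saturation the paper only mentions in its introduction --- to run a back-and-forth over countable dense sets and extend by continuity. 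What your approach buys: no ultraproduct machinery at all, no restriction to finite languages (the paper's displayed formulas assume $L=\{P,F\}$ and it defers arbitrary languages to ``partial types''), and an explicit assembly of the isomorphism, whereas the paper leaves the passage from matched nets to an isomorphism as an observation. What the paper's approach buys: it transfers whole nets simultaneously, so no inductive bookkeeping is needed.

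One point in your write-up needs tightening: in the third paragraph you speak of matching \emph{quantifier-free} $\epsilon$-types along the back-and-forth. Quantifier-free agreement alone is not enough to continue the construction: to find the next point $b\in N$ corresponding to a target $a\in M$, you must know that $N$ approximately satisfies an $\inf_y$-statement at the previously chosen tuple $\beta$, and that information is not part of the quantifier-free type of $\beta$. The standard fix, consistent with your second paragraph, is to maintain agreement of \emph{full} types (equivalently, that each finite partial map is elementary); exact realization via compactness then keeps the induction going, and the quantifier-free consequences (isometry, preservation of predicates and functions) are what survive to the limit map.
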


\begin{proof}
Let $N\equiv M$. Without loss of generality, we may assume that ${N}$ is $\omega$-saturated.  Indeed, if $\mathcal{U}$ is a nonprincipal ultrafilter on $\mathbb{N}$, then ${N}^\mathcal{U}\equiv {M}$ and ${N}^\mathcal{U}$ is $\omega_1$-saturated.  If further ${N}^\mathcal{U}\cong {M}$, then again ${N}\cong {M}$ by \L o\'s's theorem.

For ease of notation, we assume that ${L}=\{P,F\}$, where $P$ is a unary predicate symbol and $F$ is a unary function symbol.  The proof that we give below extends immediately to arbitrary finite languages.  For arbitrary languages, one needs to replace single conditions by partial types.
 
For $m\geq 1$, let $\{a_1^m,\ldots,a^m_{n(m)}\}$ be a finite $\frac{1}{m}$-net for $M$.  For $i\in\{1,\ldots,n(m)\}$, let $r(i,m):=P^M(a_i^m)$ and fix $j(i,m)\in \{1,\ldots,n(m)\}$ such that \[ d(F(a_i^m),a^m_{j(i,m)})\leq \tfrac{1}{m}. \]  For $i,j\in \{1,\ldots,n(m)\}$, set $s(i,j,m):=d(a_i^m,a_j^m)$.  Consider the following formulae: \begin{align*}
	\psi_m(x_1,\ldots,x_{n(m)}) &:= \sup_x\bigl(\min_{1\leq i\leq n(m)}(d(x,x_i)\dotminus \tfrac{1}{m})\bigr),\\
	\chi_m(x_1,\ldots,x_{n(m)}) &:= \max_{1\leq i\leq n(m)}\bigl(\max\bigl(|P(x_i)-r(i,m)|,d(F(x_i),x_{j(i,m)})\dotminus \tfrac{1}{m}\bigr)\bigr),\\
	\tau_m(x_1,\ldots,x_{n(m)}) &:= \max_{1\leq i,j\leq n(m)}|d(x_i,x_j)-s(i,j,m)|,\\
	\varphi_m(x_1,\ldots,x_{n(m)}) &:= \max(\psi_m,\chi_m,\tau_m).
\end{align*}

Since ${N}\equiv {M}$ and ${N}$ is $\omega$-saturated, we have that there exists, for each $m\geq 1$, $b_1^m,\ldots,b_{n(m)}^m\in N$ such that $\varphi_m^N(b_1,\ldots,b_{n(m)})=0$.  Set $A_m:=\{a_1^m,\ldots,a_{n(m)}^m\}$ and $B_m:=\{b_1^m,\ldots,b^m_{n(m)}\}$. It remains to observe that $A:=\bigcup_m A_m$ is dense in $M$ and $B:=\bigcup_m B_m$ is dense in $N$.
\end{proof}

Our proof shows that, unlike finite structures in finite languages in classical logic, compact structures in a finite language are not finitely axiomatizable, but rather, countably axiomatizable.

\end{document}